\newtheorem{theorem}{Theorem}[section]
\newtheorem{lemma}[theorem]{Lemma}
\theoremstyle{definition}
\newtheorem{definition}[theorem]{Definition}
\theoremstyle{remark}
\numberwithin{equation}{section}
\begin{document}
\definecolor{zzttqq}{rgb}{0.6,0.2,0}
\definecolor{qqqqff}{rgb}{0,0,1}
\definecolor{darkspringgreen}{rgb}{0.09, 0.45, 0.27}
\title{Maximal Green Sequences for Cluster Algebras Associated to the $n$-Torus}

\author{Eric Bucher}
\address{Mathematics Department\\
Louisiana State University\\
Baton Rouge, Louisiana}
\email{ebuche2@tigers.lsu.edu}

\subjclass{}
\date{December 11th, 2014}

\begin{abstract}
Given a marked surface $(S,M)$ we can add arcs to the surface to create a triangulation, $T$, of that surface. For each triangulation, $T$, we can associate a cluster algebra. In this paper we will consider the torus of genus $n$ with two interior marked points (called punctures). We will construct a specific triangulation of this surface which yeilds a specific quiver. Then in the sense of work by Keller we will produce a \textit{maximal green sequence} for this quiver.

\end{abstract}

\maketitle

\section{Introduction}
Cluster algebras were invented by Fomin and Zelevinsky \cite{cluster1} in 2003. Within a very short period of time cluster algebras became an important tool in the study of phenomena in various areas of mathematics and mathematical physics. They play an important role in the study of Teichm\"{u}ller theory, canonical bases, total positivity, Poisson Lie-groups, Calabi-Yau algebras, noncommutative Donaldson-Thomas invariants, scattering amplitudes,  and representations of finite dimensional algebras. For more information on the diverse scope of cluster algebras see the review paper by Williams \cite{williams}. 
\\

The idea of maximal green sequences of cluster mutations was introduced by Keller in \cite{keller}. He explored important applications of this notion, by utilizing it in the explicit computation of noncommutative Donaldson-Thomas invariants of triangulated categories which were introduced by Kontsevich and Soibelman in \cite{kontsevich}. Addtionally, Alim, et al worked with this notion in connection with the computation of spectra of BPS states \cite{alim}. Very recently this notion also played a key role in the Gross-Hacking-Keel-Kontsevich \cite{gross} proof of the full Fock-Goncharov conjecture for large classes of cluster algebras.
\\

The problem of existence of maximal green sequences of cluster mutations is difficult due to the interative nature of the choices of mutations. The iterative nature of the problem means that exhaustive methods are not always effective when searching for a maximal green sequence. In spite of this difficulty there has been a vast amount of progress made in the area. Br\"{u}stle, Dupont, and Perotin proved the existence of maximal green sequences for cluster algebras of finite type in \cite{brustle}. Alim et al. showed that cluster algebras from surfaces with nonempty boundry have a maximal green sequence \cite{alim}. Yakimov proved the existence of maximal green sequences for the Berenstein-Fomin-Zelevinsky cluster algebras on all double Bruhat cells in Kac-Moody groups in \cite{yakimov}. Also, Garver and Musiker constructed maximal green sequences for all type A quivers in \cite{garver}. 
\\

In general a cluster algebra can be constructed from any orientable surface by looking at the possible triangulations of that surface. This construction is introduced by Gekhtman, Shapiro, and Vainshtein in \cite{gekhtman} and in a more general setting by Fock and Goncharov in \cite{fock}. This construction is extremely important because any cluster algebra of finite mutation type can be realized as a cluster algebra which arises from a surface following this construction. An important problem in cluster algebras is then to prove the existence or non-existence of maximal green sequences for each cluster algebra which arises from the triangulation of a surface. This paper will prove the existence of maximal green sequences for an infinite family of cluster algebras which arise this way. For a more in depth look into the procedure of creating a cluster algebra from a triangulated surface see the work by Fomin, Shapiro, and Thurston \cite{fomin}. 
\\

In this paper we prove the existence of a maximal green sequence for cluster algebras which arise from triangulations of the twice punctured $n$-torus. This is an infinite family of cluster algebras for which we explicitly find the maximal green sequence. We will start with an $n$-torus. We then construct a specific triangulation of this surface, $T_n$. This triangulation is chosen to contain a large amount of symmetry, which will play an integral part in our main proof. The construction of this triangulation will be discussed in section 3. After constructing the triangulation, we look at the quiver it correlates to, $Q_{T_n}$. We take advantage of the symmetry of this quiver, by breaking it into smaller parts. This cluster algebra contains a large $n$ cycle with identical subquivers attached to each vertex. We construct a green sequence for the cycle, which leaves the attached subquivers unaffected. We can then apply a green sequence to the subquivers which will minimally effect the vertices on the cycle. Various mutations are then done to correct these minimal effects. We want to emphasize that the ability to correct these effects is directly related to the choice of triangulation. By creating subquivers of a certain structure we can gurantee that they will not be drasticatly affected by the sequence of mutations applied to the interconnecting cycle. The combining of these sequences will result in a maximal green sequence for the quiver $Q_{T_n}$.  In essence, we are creating seperate maximal green sequences for each "piece" of the quiver and then creating a procedure for gluing these sequences together. The details of the proof are presented in section 4 of this paper. Before beginning we need to establish some background definitions and notation.

\section{Preliminaries}

We will follow the notation laid out by Br\"{u}stle, Dupont, and Perotin \cite{brustle}.
\begin{definition} A \textbf{quiver}, $Q$, is a directed graph containing no $2$-cycles or loops.
\end{definition}

The notation $Q_0$ will denote the vertices of $Q$. Also, $Q_1$ will denote the edges of $Q$ which are referred to as \textbf{arrows}. We will let $Q_0=[N]$.

\begin{definition} An \textbf{ice quiver} is a pair $(Q,F)$ where Q is a quiver as described above and $F \subset Q_0$ is a subset of vertices called frozen vertices; such that there are no arrows between them. For simplicity, we always assume that $Q_0=\{1,2,3, \dots ,n+m\}$ and that $F=\{n+1,n+2,\dots, n+m\}$ for some integers $n,m \geq 0$. If $F$ is empty we write $(Q,\emptyset)$ for the ice quiver. 
\end{definition}

In this paper we will be concerned with a process called mutation. Mutation is a process of obtaining a new ice quiver from an existing one. 

\begin{definition} Let $(Q,F)$ be an ice quiver and $k\in Q_0$ a non-frozen vertex. The \textbf{mutation} of a quiver $(Q,F)$ at a vertex $k$ is denoted $\mu_k$, and produces a new ice quiver $(\mu_k(Q),F)$. The vertices of $(\mu_k(Q),F)$ are the same vertices from $(Q,F)$. The arrows of the new quiver are obtained by performing the following $3$ steps:
\begin{enumerate}
\item For every 2-path $i \rightarrow k \rightarrow j$ , adjoin a new arrow $i \rightarrow j$.
\item Reverse the direction of all arrows incident to $k$.
\item Delete any 2-cycles created during the first two steps as well as any arrows created between frozen vertices.
\end{enumerate}
\end{definition}

It is important to note that we do not allow mutation at a frozen vertex. We will denote $Mut(Q)$ to be the set of all quivers who can be obtained from $Q$ by a sequence of mutations.

The ice quivers which are of concern in this paper have a very specific set of frozen vertices. We will be looking at what are referred to as the framed and coframed quivers associated to $Q$.

\begin{definition} 
The \textbf{framed quiver} associated with $Q$ is the quiver $\hat{Q}$ such that: 

$$\hat{Q}_0 = Q_0 \sqcup \{i'\text{ }|\text{ }i\in Q_0\}$$
$$\hat{Q}_1 = Q_1 \sqcup \{i \to i'\text{ }|\text{ }i \in Q_0\}$$

\flushleft The \textbf{coframed quiver} associated with $Q$ is the quiver $\breve{Q}$ such that:
$$\breve{Q}_0 = Q_0 \sqcup \{i'\text{ }|\text{ }i\in Q_0\}$$
$$\breve{Q}_1 = Q_1 \sqcup \{i' \to i\text{ }|\text{ }i \in Q_0\}$$
\end{definition}
Both quivers $\hat{Q}$ and $\breve{Q}$ are naturally ice quivers whose frozen vertices are commonly written as $\hat{Q}_0'$ and $\breve{Q}_0'$. Next we will talk about what it means for a vertex to be green or red.

\begin{definition}
Let $R \in Mut(\hat{Q})$. A non-frozen vertex $i \in R_0$ is called \textbf{green} if $$\{j'\in Q_0'\text{ }| \text{ } \exists \text{ } j' \rightarrow i \in R_1 \}=\emptyset.$$ It is called \textbf{red} if $$\{j'\in Q_0'\text{ }| \text{ } \exists \text{ } j' \leftarrow i \in R_1 \}=\emptyset.$$ 
\end{definition}

In \cite{brustle} they show that every non-frozen vertex in $R_0$ is either red or green. This idea is what motivates our work in this paper. It arises as a question of green sequences. 

\begin{definition}
A \textbf{green sequence} for $Q$ is a sequence $\textbf{i}=\{i_1, \dots, i_l\} \subset Q_0$ such that $i_1$ is green in $\hat{Q}$ and for any $2\leq k \leq l$, the vertex $i_k$ is green in $\mu_{i_{k-1}}\circ \cdots \circ \mu_{i_1}(\hat{Q})$. The integer $l$ is called the length of the sequence $\textbf{i}$ and is denoted by $l(\textbf{i})$.\\
\\
A green sequence \textbf{i} is called maximal if every non-frozen vertex in $\mu_{\textbf{i}}(\hat{Q})$ is red where $\mu_{\textbf{i}}=\mu_{i_{l}}\circ \cdots \circ \mu_{i_1}$. We denote the set of all maximal green sequences for $Q$ by $$\text{green}(Q)=\{\textbf{i } | \textbf{ i}\text{ is a maximal green sequence for }Q\}.$$  
\end{definition}

In this paper we will construct a maximal green sequence for a specific infinite family of quivers which will be described in the following section. In essence what we want to show is that green$(Q)\neq \emptyset$ for each quiver, $Q$, in this family. In order to do this we must first discuss where our quivers are coming from.

\section{Constructing the Triangulation $T_n$}
In work by Fomin, Shapiro, and Thurston \cite{fomin} there is a very precise description of how you can associate a quiver $Q$ to a triangulated surface. The surfaces that we will be discussing in this paper are twice punctured tori. We will find a specific triangulation on those surfaces which we will denote $T_n$. By following the techniques outlined in \cite{fomin} from there we will form the associated quiver which we will denote by $Q_{T_n}$. 
\\
\\Start by letting $(S,M)=$ the torus of genus $n$ with two interior marked points. Now we will construct the desired traingulation $T_n$ for the marked surface $(S,M)$.  We start by drawing $(S,M)$ as the identification space below.

\[\begin{tikzpicture}[line cap=round,line join=round,>=triangle 45,x=0.3cm,y=0.3cm]
\clip(-11.54,-21.4) rectangle (5.26,-4.41);
\fill[color=zzttqq,fill=zzttqq,fill opacity=0.1] (-5.42,-20.19) -- (-1.9,-20.21) -- (1.23,-18.59) -- (3.25,-15.71) -- (3.69,-12.22) -- (2.46,-8.92) -- (-0.16,-6.57) -- (-3.57,-5.71) -- (-7,-6.53) -- (-9.64,-8.85) -- (-10.91,-12.13) -- (-10.51,-15.63) -- (-8.52,-18.54) -- cycle;
\draw [color=zzttqq] (-5.42,-20.19)-- (-1.9,-20.21);
\draw [color=zzttqq] (-1.9,-20.21)-- (1.23,-18.59);
\draw [color=zzttqq] (1.23,-18.59)-- (3.25,-15.71);
\draw [color=zzttqq] (3.25,-15.71)-- (3.69,-12.22);
\draw [color=zzttqq] (3.69,-12.22)-- (2.46,-8.92);
\draw [color=zzttqq] (2.46,-8.92)-- (-0.16,-6.57);
\draw [color=zzttqq] (-0.16,-6.57)-- (-3.57,-5.71);
\draw [color=zzttqq] (-3.57,-5.71)-- (-7,-6.53);
\draw [color=zzttqq] (-7,-6.53)-- (-9.64,-8.85);
\draw [color=zzttqq] (-9.64,-8.85)-- (-10.91,-12.13);
\draw [color=zzttqq] (-10.91,-12.13)-- (-10.51,-15.63);
\draw [color=zzttqq] (-10.51,-15.63)-- (-8.52,-18.54);
\draw [dash pattern=on 4pt off 4pt,color=zzttqq] (-8.52,-18.54)-- (-5.42,-20.19);
\begin{scriptsize}
\fill [color=zzttqq] (-5.42,-20.19) circle (1.5pt);
\fill [color=zzttqq] (-1.9,-20.21) circle (1.5pt);
\draw[color=zzttqq] (-3.53,-20.54) node {$b_2$};
\draw[color=zzttqq] (0.1,-19.68) node {$a_2$};
\draw[color=zzttqq] (2.84,-17.22) node {$b_2$};
\draw[color=zzttqq] (4.24,-13.74) node {$a_2$};
\draw[color=zzttqq] (3.81,-10.07) node {$b_1$};
\draw[color=zzttqq] (1.63,-6.99) node {$a_1$};
\draw[color=zzttqq] (-1.58,-5.23) node {$b_1$};
\draw[color=zzttqq] (-5.29,-5.23) node {$a_1$};
\draw[color=zzttqq] (-8.73,-6.91) node {$b_n$};
\draw[color=zzttqq] (-10.84,-9.96) node {$a_n$};
\draw[color=zzttqq] (-11.23,-13.67) node {$b_n$};
\draw[color=zzttqq] (-10.02,-17.14) node {$a_n$};
\draw[color=zzttqq] (-7.09,-19.64) node {};
\fill [color=zzttqq] (1.23,-18.59) circle (1.5pt);
\fill [color=zzttqq] (3.25,-15.71) circle (1.5pt);
\fill [color=zzttqq] (3.69,-12.22) circle (1.5pt);
\fill [color=zzttqq] (2.46,-8.92) circle (1.5pt);
\fill [color=zzttqq] (-0.16,-6.57) circle (1.5pt);
\fill [color=zzttqq] (-3.57,-5.71) circle (1.5pt);
\fill [color=zzttqq] (-7,-6.53) circle (1.5pt);
\fill [color=zzttqq] (-9.64,-8.85) circle (1.5pt);
\fill [color=zzttqq] (-10.91,-12.13) circle (1.5pt);
\fill [color=zzttqq] (-10.51,-15.63) circle (1.5pt);
\fill [color=zzttqq] (-8.52,-18.54) circle (1.5pt);
\fill [color=qqqqff] (-3.73,-13.39) circle (1.5pt);
\end{scriptsize}
\end{tikzpicture}\]

After we have created the identification space we want to add additional arcs to create a triangulation of this space. At the moment the set of arcs we will be using in our triangulation are $a_1,b_1, a_2,b_2\dots, a_n,b_n$. The additional arcs we wish to add can be seen in the diagram below.

\[\begin{tikzpicture}[line cap=round,line join=round,>=triangle 45,x=0.3cm,y=0.3cm]
\clip(-11.54,-21.4) rectangle (5.26,-4.41);
\fill[color=zzttqq,fill=zzttqq,fill opacity=0.1] (-5.42,-20.19) -- (-1.9,-20.21) -- (1.23,-18.59) -- (3.25,-15.71) -- (3.69,-12.22) -- (2.46,-8.92) -- (-0.16,-6.57) -- (-3.57,-5.71) -- (-7,-6.53) -- (-9.64,-8.85) -- (-10.91,-12.13) -- (-10.51,-15.63) -- (-8.52,-18.54) -- cycle;
\draw [color=zzttqq] (-5.42,-20.19)-- (-1.9,-20.21);
\draw [color=zzttqq] (-1.9,-20.21)-- (1.23,-18.59);
\draw [color=zzttqq] (1.23,-18.59)-- (3.25,-15.71);
\draw [color=zzttqq] (3.25,-15.71)-- (3.69,-12.22);
\draw [color=zzttqq] (3.69,-12.22)-- (2.46,-8.92);
\draw [color=zzttqq] (2.46,-8.92)-- (-0.16,-6.57);
\draw [color=zzttqq] (-0.16,-6.57)-- (-3.57,-5.71);
\draw [color=zzttqq] (-3.57,-5.71)-- (-7,-6.53);
\draw [color=zzttqq] (-7,-6.53)-- (-9.64,-8.85);
\draw [color=zzttqq] (-9.64,-8.85)-- (-10.91,-12.13);
\draw [color=zzttqq] (-10.91,-12.13)-- (-10.51,-15.63);
\draw [color=zzttqq] (-10.51,-15.63)-- (-8.52,-18.54);
\draw [dash pattern=on 4pt off 4pt,color=zzttqq] (-8.52,-18.54)-- (-5.42,-20.19);
\draw (-7,-6.53)-- (3.69,-12.22);
\draw (3.69,-12.22)-- (-3.57,-5.71);
\draw (-3.57,-5.71)-- (2.46,-8.92);
\draw (3.69,-12.22)-- (-5.42,-20.19);
\draw (-5.42,-20.19)-- (3.25,-15.71);
\draw (3.25,-15.71)-- (-1.9,-20.21);
\draw (-8.52,-18.54)-- (-7,-6.53);
\draw (-7,-6.53)-- (-10.51,-15.63);
\draw (-10.51,-15.63)-- (-9.64,-8.85);
\begin{scriptsize}
\fill [color=zzttqq] (-5.42,-20.19) circle (1.5pt);
\fill [color=zzttqq] (-1.9,-20.21) circle (1.5pt);
\draw[color=zzttqq] (-3.53,-20.54) node {$b_2$};
\draw[color=zzttqq] (0.1,-19.68) node {$a_2$};
\draw[color=zzttqq] (2.84,-17.22) node {$b_2$};
\draw[color=zzttqq] (4.24,-13.74) node {$a_2$};
\draw[color=zzttqq] (3.81,-10.07) node {$b_1$};
\draw[color=zzttqq] (1.63,-6.99) node {$a_1$};
\draw[color=zzttqq] (-1.58,-5.23) node {$b_1$};
\draw[color=zzttqq] (-5.29,-5.23) node {$a_1$};
\draw[color=zzttqq] (-8.73,-6.91) node {$b_n$};
\draw[color=zzttqq] (-10.84,-9.96) node {$a_n$};
\draw[color=zzttqq] (-11.23,-13.67) node {$b_n$};
\draw[color=zzttqq] (-10.02,-17.14) node {$a_n$};
\draw[color=zzttqq] (-7.09,-19.64) node {};
\fill [color=zzttqq] (1.23,-18.59) circle (1.5pt);
\fill [color=zzttqq] (3.25,-15.71) circle (1.5pt);
\fill [color=zzttqq] (3.69,-12.22) circle (1.5pt);
\fill [color=zzttqq] (2.46,-8.92) circle (1.5pt);
\fill [color=zzttqq] (-0.16,-6.57) circle (1.5pt);
\fill [color=zzttqq] (-3.57,-5.71) circle (1.5pt);
\fill [color=zzttqq] (-7,-6.53) circle (1.5pt);
\fill [color=zzttqq] (-9.64,-8.85) circle (1.5pt);
\fill [color=zzttqq] (-10.91,-12.13) circle (1.5pt);
\fill [color=zzttqq] (-10.51,-15.63) circle (1.5pt);
\fill [color=zzttqq] (-8.52,-18.54) circle (1.5pt);
\fill [color=qqqqff] (-3.73,-13.39) circle (1.5pt);
\draw[color=black] (-1.81,-9.64) node {$e_1$};
\draw[color=black] (0.61,-8.2) node {$c_1$};
\draw[color=black] (-1.92,-7.57) node {$d_1$};
\draw[color=black] (-1.23,-15.42) node {$e_2$};
\draw[color=black] (-0.72,-17.2) node {$d_2$};
\draw[color=black] (-0.72,-18.7) node {$c_2$};
\draw[color=black] (-6.73,-12.34) node {$e_n$};
\draw[color=black] (-9,-13) node {$d_n$};
\draw[color=black] (-9.48,-12.03) node {$c_n$};
\end{scriptsize}
\end{tikzpicture}
\]
Now we will finish our triangulation by adding a wheel pattern to the center puncture. The arcs added will be labeled as below and there will be $n$ edges added.

\[\begin{tikzpicture}[line cap=round,line join=round,>=triangle 45,x=0.3cm,y=0.3cm]
\clip(-11.54,-21.4) rectangle (5.26,-4.41);
\fill[color=zzttqq,fill=zzttqq,fill opacity=0.1] (-5.42,-20.19) -- (-1.9,-20.21) -- (1.23,-18.59) -- (3.25,-15.71) -- (3.69,-12.22) -- (2.46,-8.92) -- (-0.16,-6.57) -- (-3.57,-5.71) -- (-7,-6.53) -- (-9.64,-8.85) -- (-10.91,-12.13) -- (-10.51,-15.63) -- (-8.52,-18.54) -- cycle;
\draw [color=zzttqq] (-5.42,-20.19)-- (-1.9,-20.21);
\draw [color=zzttqq] (-1.9,-20.21)-- (1.23,-18.59);
\draw [color=zzttqq] (1.23,-18.59)-- (3.25,-15.71);
\draw [color=zzttqq] (3.25,-15.71)-- (3.69,-12.22);
\draw [color=zzttqq] (3.69,-12.22)-- (2.46,-8.92);
\draw [color=zzttqq] (2.46,-8.92)-- (-0.16,-6.57);
\draw [color=zzttqq] (-0.16,-6.57)-- (-3.57,-5.71);
\draw [color=zzttqq] (-3.57,-5.71)-- (-7,-6.53);
\draw [color=zzttqq] (-7,-6.53)-- (-9.64,-8.85);
\draw [color=zzttqq] (-9.64,-8.85)-- (-10.91,-12.13);
\draw [color=zzttqq] (-10.91,-12.13)-- (-10.51,-15.63);
\draw [color=zzttqq] (-10.51,-15.63)-- (-8.52,-18.54);
\draw [dash pattern=on 4pt off 4pt,color=zzttqq] (-8.52,-18.54)-- (-5.42,-20.19);
\draw (-7,-6.53)-- (3.69,-12.22);
\draw (3.69,-12.22)-- (-3.57,-5.71);
\draw (-3.57,-5.71)-- (2.46,-8.92);
\draw (3.69,-12.22)-- (-5.42,-20.19);
\draw (-5.42,-20.19)-- (3.25,-15.71);
\draw (3.25,-15.71)-- (-1.9,-20.21);
\draw (-8.52,-18.54)-- (-7,-6.53);
\draw (-7,-6.53)-- (-10.51,-15.63);
\draw (-10.51,-15.63)-- (-9.64,-8.85);
\begin{scriptsize}
\fill [color=zzttqq] (-5.42,-20.19) circle (1.5pt);
\fill [color=zzttqq] (-1.9,-20.21) circle (1.5pt);
\draw[color=zzttqq] (-3.53,-20.54) node {$b_2$};
\draw[color=zzttqq] (0.1,-19.68) node {$a_2$};
\draw[color=zzttqq] (2.84,-17.22) node {$b_2$};
\draw[color=zzttqq] (4.24,-13.74) node {$a_2$};
\draw[color=zzttqq] (3.81,-10.07) node {$b_1$};
\draw[color=zzttqq] (1.63,-6.99) node {$a_1$};
\draw[color=zzttqq] (-1.58,-5.23) node {$b_1$};
\draw[color=zzttqq] (-5.29,-5.23) node {$a_1$};
\draw[color=zzttqq] (-8.73,-6.91) node {$b_n$};
\draw[color=zzttqq] (-10.84,-9.96) node {$a_n$};
\draw[color=zzttqq] (-11.23,-13.67) node {$b_n$};
\draw[color=zzttqq] (-10.02,-17.14) node {$a_n$};
\draw[color=zzttqq] (-7.09,-19.64) node {};
\fill [color=zzttqq] (1.23,-18.59) circle (1.5pt);
\fill [color=zzttqq] (3.25,-15.71) circle (1.5pt);
\fill [color=zzttqq] (3.69,-12.22) circle (1.5pt);
\fill [color=zzttqq] (2.46,-8.92) circle (1.5pt);
\fill [color=zzttqq] (-0.16,-6.57) circle (1.5pt);
\fill [color=zzttqq] (-3.57,-5.71) circle (1.5pt);
\fill [color=zzttqq] (-7,-6.53) circle (1.5pt);
\fill [color=zzttqq] (-9.64,-8.85) circle (1.5pt);
\fill [color=zzttqq] (-10.91,-12.13) circle (1.5pt);
\fill [color=zzttqq] (-10.51,-15.63) circle (1.5pt);
\fill [color=zzttqq] (-8.52,-18.54) circle (1.5pt);
\fill [color=qqqqff] (-3.73,-13.39) circle (1.5pt);
\draw[color=black] (-1.81,-9.64) node {$e_1$};
\draw[color=black] (0.61,-8.2) node {$c_1$};
\draw[color=black] (-1.92,-7.57) node {$d_1$};
\draw[color=black] (-1.23,-15.42) node {$e_2$};
\draw[color=black] (-0.72,-17.2) node {$d_2$};
\draw[color=black] (-0.72,-18.7) node {$c_2$};
\draw[color=black] (-6.9,-12.34) node {$e_n$};
\draw[color=black] (-9,-13.15) node {$d_n$};
\draw[color=black] (-9.48,-12.03) node {$c_n$};

\draw[color=blue] (-3.73,-13.39)-- (3.69,-12.22);
\draw[color=blue] (-3.73,-13.39)-- (-5.42,-20.19);
\draw[color=blue] (-3.73,-13.39)-- (-8.52,-18.54);
\draw[color=blue] (-3.73,-13.39)-- (-7,-6.53);
\draw[color=blue] (-.5,-12.25) node {$f_1$};
\draw[color=blue] (-3.5,-15.25) node {$f_2$};
\draw[color=blue] (-6,-17) node {$f_{n-1}$};
\draw[color=blue] (-5.45,-10.75) node {$f_n$};
\end{scriptsize}
\end{tikzpicture}
\]

Now we have completed our desired triangulation $T$ of the surface $(S,M)$. The arcs which are required are $$\{a_1,b_1,c_1,d_1,e_1,f_1,a_2,b_2,c_2,d_2,e_2,f_2, \dots a_n,b_n,c_n,d_n,e_n,f_n\}.$$

Now following the procedure from \cite{fomin} we can construct the quiver $Q_{T_n}$, for the above triangulation.
\[\begin{xy} 0;<.6pt,0pt>:<0pt,-.6pt>:: 
(79,146) *+{\color{darkspringgreen}{e_1}} ="0",
(213,146) *+{\color{darkspringgreen}{e_2}} ="1",
(370,146) *+{\color{darkspringgreen}{e_3}} ="2",
(518,146) *+{\color{darkspringgreen}{e_n}} ="3",
(253,237) *+{\color{darkspringgreen}{d_2}} ="4",
(180,237) *+{\color{darkspringgreen}{a_2}} ="5",
(213,201) *+{\color{darkspringgreen}{c_2}} ="6",
(213,282) *+{\color{darkspringgreen}{b_2}}="7",
(370,201) *+{\color{darkspringgreen}{c_3}} ="8",
(330,237) *+{\color{darkspringgreen}{a_3}} ="9",
(406,237) *+{\color{darkspringgreen}{d_3}} ="10",
(370,282) *+{\color{darkspringgreen}{b_3}} ="11",
(518,201) *+{\color{darkspringgreen}{c_n}} ="12",
(563,237) *+{\color{darkspringgreen}{d_n}} ="13",
(483,237) *+{\color{darkspringgreen}{a_n}} ="14",
(518,282) *+{\color{darkspringgreen}{b_n}} ="15",
(79,201) *+{\color{darkspringgreen}{c_1}} ="16",
(114,237) *+{\color{darkspringgreen}{d_1}} ="17",
(79,282) *+{\color{darkspringgreen}{b_1}} ="18",
(34,237) *+{\color{darkspringgreen}{a_1}} ="19",
(79,35) *+{\color{darkspringgreen}{f_1}} ="20",
(213,103) *+{\color{darkspringgreen}{f_2}} ="21",
(370,103) *+{\color{darkspringgreen}{f_3}} ="22",
(518,35) *+{\color{darkspringgreen}{f_n}} ="23",
(50,146) *+{\color{blue}{e_1'}} ="24",
(191,146) *+{\color{blue}{e_2'}} ="25",
(344,146) *+{\color{blue}{e_3'}} ="26",
(495,146) *+{\color{blue}{e_n'}} ="27",
(286,265) *+{\color{blue}{d_2'}} ="28",
(157,214) *+{\color{blue}{a_2'}} ="29",
(213,181) *+{\color{blue}{c_2'}} ="30",
(213,317) *+{\color{blue}{b_2'}} ="31",
(370,181) *+{\color{blue}{c_3'}} ="32",
(286,214) *+{\color{blue}{a_3'}} ="33",
(440,265) *+{\color{blue}{d_3'}} ="34",
(370,317) *+{\color{blue}{b_3'}} ="35",
(518,181) *+{\color{blue}{c_n'}} ="36",
(597,265) *+{\color{blue}{d_n'}} ="37",
(440,214) *+{\color{blue}{a_n'}} ="38",
(518,317) *+{\color{blue}{b_n'}} ="39",
(79,181) *+{\color{blue}{c_1'}} ="40",
(157,265) *+{\color{blue}{d_1'}} ="41",
(79,317) *+{\color{blue}{b_1'}} ="42",
(0,214) *+{\color{blue}{a_1'}} ="43",
(50,0) *+{\color{blue}{f_1'}} ="44",
(213,80) *+{\color{blue}{f_2'}} ="45",
(370,80) *+{\color{blue}{f_3'}} ="46",
(563,0) *+{\color{blue}{f_n'}} ="47",
"17", {\ar"0"},
"0", {\ar"19"},
"0", {\ar"20"},
"23", {\ar"0"},
"0", {\ar"24"},
"4", {\ar"1"},
"1", {\ar"5"},
"20", {\ar"1"},
"1", {\ar"21"},
"1", {\ar"25"},
"2", {\ar"9"},
"10", {\ar"2"},
"21", {\ar"2"},
"2", {\ar"22"},
"2", {\ar"26"},
"13", {\ar"3"},
"3", {\ar"14"},
"22", {\ar@{.>}"3"},
"3", {\ar"23"},
"3", {\ar"27"},
"5", {\ar"4"},
"4", {\ar"6"},
"7", {\ar"4"},
"4", {\ar"28"},
"5", {\ar"6"},
"7", {\ar"5"},
"5", {\ar"29"},
"6", {\ar|*+{\scriptstyle 2}"7"},
"6", {\ar"30"},
"7", {\ar"31"},
"9", {\ar"8"},
"10", {\ar"8"},
"8", {\ar|*+{\scriptstyle 2}"11"},
"8", {\ar"32"},
"9", {\ar"10"},
"11", {\ar"9"},
"9", {\ar"33"},
"11", {\ar"10"},
"10", {\ar"34"},
"11", {\ar"35"},
"13", {\ar"12"},
"14", {\ar"12"},
"12", {\ar|*+{\scriptstyle 2}"15"},
"12", {\ar"36"},
"14", {\ar"13"},
"15", {\ar"13"},
"13", {\ar"37"},
"15", {\ar"14"},
"14", {\ar"38"},
"15", {\ar"39"},
"17", {\ar"16"},
"16", {\ar|*+{\scriptstyle 2}"18"},
"19", {\ar"16"},
"16", {\ar"40"},
"18", {\ar"17"},
"19", {\ar"17"},
"17", {\ar"41"},
"18", {\ar"19"},
"18", {\ar"42"},
"19", {\ar"43"},
"21", {\ar"20"},
"20", {\ar"23"},
"20", {\ar"44"},
"22", {\ar"21"},
"21", {\ar"45"},
"23", {\ar@{.>}"22"},
"22", {\ar"46"},
"23", {\ar"47"},
\end{xy}\]

\section{Statement and Proof of Main Result}
In this paper we will establish a maximal green sequence for the quiver $Q_{T_n}$ constructed above. Our main result is the following:
\begin{theorem}
The quiver $Q_{T_n}$ has a maximal green sequence of 
\[(f_n,f_{n-1},\dots,f_1,f_3,f_4,\dots f_n,\sigma_n,\sigma_{n-1}, \dots \sigma_{1},f_3, f_4, \dots f_n,f_2,f_1,f_n,f_{n-1},\dots f_3, \tau_n,\tau_{n-1},\dots,\tau_1)\]
where $\sigma_i:= (e_i,d_i,b_i,c_i,a_i,b_i,d_i,e_i,c_i,a_i,b_i)$ and $\tau_i:=(e_i,b_i,a_i,c_i,e_i,d_i,b_i,a_i,e_i)$.

\end{theorem}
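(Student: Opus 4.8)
The plan is to verify the asserted sequence directly on the framed quiver $\hat{Q}_{T_n}$, tracking the position of the frozen (primed) arrows at every step and invoking the dichotomy from \cite{brustle} that each non-frozen vertex is, at all times, either red or green. Concretely, a vertex remains admissible for the next mutation exactly as long as no arrow from a primed vertex points into it, and the whole sequence is maximal precisely when, after the last mutation, every unprimed vertex has only outgoing arrows to primed vertices. Rather than mutate the full $6n$-vertex quiver blindly, I would first isolate the three structural pieces visible in $Q_{T_n}$: the oriented $n$-cycle on the $f_i$, the spoke vertices $e_i$ sitting as $f_{i-1}\to e_i\to f_i$, and the identical rank-four blocks on $\{a_i,b_i,c_i,d_i\}$, each carrying the double arrow $c_i\Rightarrow b_i$ and glued to the cycle through $d_i\to e_i\to a_i$. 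Since the construction is cyclically symmetric in $i$, I expect to reduce every claim to a single representative handle together with its two cycle neighbors and then let the index run (with a light induction on $n$ if needed to handle the wrap-around at $f_1,f_n$).

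Second, I would prove the three modular facts that make the concatenation legitimate. First, the opening run $(f_n,f_{n-1},\dots,f_1,f_3,f_4,\dots,f_n)$ is a green sequence supported entirely on the cycle that leaves every block setwise unchanged, the point being that mutating an $f_i$ only toggles arrows among the $f$'s and the spokes $e_j$, never reaching inside a block. Second, each $\sigma_i=(e_i,d_i,b_i,c_i,a_i,b_i,d_i,e_i,c_i,a_i,b_i)$ restricts to a maximal green sequence of the Markov-type block $\{a_i,b_i,c_i,d_i\}$ that turns those four vertices red, while its interaction with the cycle is confined to $e_i$ together with controlled, reversible changes at $f_{i-1}$ and $f_i$. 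Third, each closing $\tau_i=(e_i,b_i,a_i,c_i,e_i,d_i,b_i,a_i,e_i)$ cleans up $e_i$ and the residual block arrows so that the entire handle becomes red.

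Third, I would assemble these pieces in the stated order, with the two intermediate $f$-runs $(f_3,\dots,f_n,f_2,f_1)$ and $(f_n,f_{n-1},\dots,f_3)$ playing the single role of correction: they undo the minimal disturbance that the $\sigma_i$ left on the cycle, so that the $\tau_i$ begin from exactly the local configuration they require. At each mutation I would certify greenness by exhibiting the absence of an incoming primed arrow at the fired vertex, doing this once per symmetry orbit; and at the very end I would certify that every unprimed vertex is red, i.e.\ receives no incoming arrow and emits only primed arrows.

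The hard part, and where I would spend the most care, is the bookkeeping of the cross-arrows between a block and its adjacent cycle vertices during the $\sigma_i$ and $\tau_i$ runs. Repeated mutation through $e_i$ creates and destroys arrows to $f_{i-1}$ and $f_i$, and the double arrow $c_i\Rightarrow b_i$ makes the induced multiplicities change nonlinearly, so I must show that these induced arrows are exactly cancelled by the correction $f$-runs and never produce a premature incoming primed arrow that would break greenness. Proving this locality-with-exact-cancellation invariant — essentially that the symmetry of the triangulation $T_n$ forces each $\sigma_i$ to perturb the cycle only in a way the subsequent $f$-mutations can reverse — is the crux of the argument; once it is established for a single handle, cyclic symmetry propagates it to all $i$ and the verification of greenness throughout, together with redness at the end, completes the proof.
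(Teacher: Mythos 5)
Your proposal follows essentially the same route as the paper: the same decomposition into the $f$-cycle, the spokes $e_i$, and the blocks on $\{a_i,b_i,c_i,d_i\}$, the same modular facts (the opening $f$-run is a maximal green sequence for the cycle not touching the blocks, each $\sigma_i$ reddens its handle while disturbing only $e_i$, $f_{i-1}$, $f_i$, the middle $f$-run restores the cycle and re-greens the $e_i$ so the $\tau_i$ can fire), and the same concatenation argument; the paper settles your ``crux'' invariant by explicit finite computation on a single representative handle (via Keller's applet) plus cyclic symmetry, which is exactly the reduction you propose.
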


We will look at the quiver $Q_{T_n}$, and try and break it down into smaller subquivers. The first subquiver of $Q_{T_n}$ we will consider is the oriented $n$-cycle , $C$, which consists of vertices $C_0=\{f_1,f_2,f_3,\dots f_n\}$ and arrows $C_1=\{f_i\rightarrow f_{i-1} | 1\leq i\leq n\ \text{ with } f_0=f_n\}$.

\[\begin{xy}0;<.5pt,0pt>:<0pt,-.5pt>:: 
(75,175) *+{\color{darkspringgreen}{f_3}} ="0",
(100,250) *+{\color{darkspringgreen}{f_4}} ="1",
(175,275) *+{\color{darkspringgreen}{f_5}} ="2",
(275,175) *+{\color{darkspringgreen}{f_{n-1}}} ="3",
(100,100) *+{\color{darkspringgreen}{f_2}} ="4",
(175,75) *+{\color{darkspringgreen}{f_1}} ="5",
(250,100) *+{\color{darkspringgreen}{f_n}} ="6",
(250,250) *+{\color{darkspringgreen}{f_6}} ="7",
(175,0) *+{\color{blue}{f'_1}} ="8",
(300,75) *+{\color{blue}{f'_n}} ="9",
(350,175) *+{\color{blue}{f'_{n-1}}} ="10",
(300,300) *+{\color{blue}{f'_6}} ="11",
(175,350) *+{\color{blue}{f'_5}} ="12",
(50,300) *+{\color{blue}{f'_4}} ="13",
(0,175) *+{\color{blue}{f'_3}} ="14",
(50,75) *+{\color{blue}{f'_2}} ="15",
"1", {\ar"0"},
"0", {\ar"4"},
"0", {\ar"14"},
"2", {\ar"1"},
"1", {\ar"13"},
"7", {\ar"2"},
"2", {\ar"12"},
"6", {\ar"3"},
"3", {\ar@{.>}"7"},
"3", {\ar"10"},
"4", {\ar"5"},
"4", {\ar"15"},
"5", {\ar"6"},
"5", {\ar"8"},
"6", {\ar"9"},
"7", {\ar"11"},
\end{xy}\]

\begin{lemma}[Cycle Lemma]
\label{cycle}
 The sequence $(f_n,f_{n-1}, f_{n-2}, \dots f_1,f_3,f_4,\dots, f_n)$ is a maximal green seqence for the subquiver $C$.
\end{lemma}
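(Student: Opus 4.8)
The plan is to work inside the framed quiver $\hat{C}$, whose frozen vertices are $f_1',\dots,f_n'$ with arrows $f_i\to f_i'$, and to follow the whole sequence explicitly. Since the only arrows to frozen vertices in $\hat{C}$ are the $f_i\to f_i'$, every $f_i$ is green to begin with. For the bookkeeping I would record, after each mutation, both the arrows among the mutable vertices $f_1,\dots,f_n$ and the net arrows between each $f_i$ and the frozen vertices (its $c$-vector); the green/red status is then read directly off the latter, exactly as in the definition. The argument splits along the two phases of the sequence, and the real work is maintaining a closed-form description of these data through the $2$-cycle cancellations that each mutation produces.

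\emph{Phase 1} ($f_n,f_{n-1},\dots,f_1$). I would first observe a stability principle: mutating at a green vertex $f_k$ reverses only the frozen arrows out of $f_k$ and creates new frozen arrows of the form (mutable $\to f_k\to$ frozen), all pointing \emph{into} frozen vertices; hence it can never create an arrow from a frozen vertex into another mutable vertex, and so every not-yet-mutated $f_j$ stays green. This makes each of the mutations $f_n,\dots,f_1$ legal. The substance is then to show that after $f_n,\dots,f_2$ have been mutated the unique green vertex is $f_1$, with $c_1=e_1+e_3+e_4+\cdots+e_n$, and that the only mutable arrow into $f_1$ is $f_3\to f_1$. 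Mutating $f_1$ is the one genuinely special step: it reverses the frozen arrows out of $f_1$ (so $f_1$ turns red) and, through the $2$-paths $f_3\to f_1\to f_j'$, transfers the content of $c_1$ onto $f_3$ while the created $f_3\to f_3'$ cancels the standing $f_3'\to f_3$, yielding $c_3=e_1+e_4+\cdots+e_n$ and leaving every other vertex red. Thus after Phase 1 the quiver has a single green vertex $f_3$.

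\emph{Phase 2} ($f_3,f_4,\dots,f_n$). Here I would carry the tighter inductive hypothesis that immediately before $f_k$ is mutated, $f_k$ is the unique green vertex and $c_k=e_1+e_{k+1}+\cdots+e_n$. Mutating $f_k$ negates $c_k$ (so $f_k$ becomes red) and, via the $2$-paths through $f_k$, turns \emph{exactly} $f_{k+1}$ green with $c_{k+1}=e_1+e_{k+2}+\cdots+e_n$, each step dropping one basis vector. Iterating, after $\mu_{f_n}$ we reach $c_n=-e_1$ and every mutable vertex is red. Since each mutation was performed at a green vertex and the terminal quiver is entirely red, the sequence is a maximal green sequence for $C$.

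The hard part is the combinatorial bookkeeping rather than any single idea. In both phases the $c$-vector updates are driven by which mutable arrows point into the vertex being mutated, so I must track the mutable sub-quiver precisely and control the $2$-cycles that cancel at each step; pinning down the right closed form for the intermediate mutable quivers — in particular verifying that at the end of Phase 1 the only arrow into $f_1$ is $f_3\to f_1$ — is what makes the induction close. A second subtlety specific to this problem is that red vertices genuinely turn green again, once at the end of Phase 1 and once at every step of Phase 2, so unlike in an acyclic situation I cannot argue that the status is monotone and must verify at each Phase 2 step that precisely the single vertex $f_{k+1}$ flips. Finally I would dispatch the base cases $n=3,4$ and the cyclic wrap-around convention $f_0=f_n$ directly.
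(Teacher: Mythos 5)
Your proposal is correct and takes essentially the same route as the paper: the paper likewise splits the sequence into the two phases, invokes the Br\"ustle--Dupont--Perotin lemma (your ``stability principle'') to legalize the first $n$ mutations, tracks the quiver explicitly to see that the frozen arrows accumulate at $f_1$ with $f_3\to f_1$ the unique mutable arrow into $f_1$, and then watches the single green vertex get handed off $f_3\to f_4\to\cdots\to f_n$ in the second phase until every vertex is red. The intermediate states you record (e.g.\ $c_1=e_1+e_3+\cdots+e_n$ before $\mu_{f_1}$, and $c_k=e_1+e_{k+1}+\cdots+e_n$ with $f_k$ the unique green vertex at each Phase 2 step) agree exactly with the paper's diagrams, so the remaining bookkeeping you flag is precisely what the paper's pictures carry out.
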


\begin{proof}
 First we must check that each mutation which occurs in the sequence occurs at a green vertex. In \cite{brustle} Lemma 2.16 shows that if a vertex $k$ is green in the quiver $Q$, then vertex $k$ is green in the quiver $\mu_j(Q)$ as long as $k \neq j$. Therefore every mutation in the sequence must occur at a green vertex until its second appearance in the sequence. In our case the first $n$ mutations must occur at green vertices.
\vspace{15pt}

In order to understand why the other mutations occur at green vertices it is important to recall from \cite{brustle} that each vertex is either green or red at every mutation step of the sequence. Therefore in order to show that a vertex, $f_k$, is green we must find one arrow $f_k \rightarrow f_j'$ for some $f_j \in C_0$ and all other arrows between $f_k$ and frozen vertices should have $f_k$ as a source as well.

Let us start by considering the quiver, $\hat{C}$, before we have done any mutations. Consider the vertex $f_n$; it is involved in the following arrows: $f_n \rightarrow f_{n-1}$, $f_{1} \rightarrow f_n$, and $f_n \rightarrow f_n'$.  The only arrow with target $f_n$ is the arrow $f_{1} \rightarrow f_n$. In the picture below you see what occurs after our first mutation $\mu_{f_n}$.

\[\begin{xy} 0;<.5pt,0pt>:<0pt,-.5pt>:: 
(0,50) *+{\color{darkspringgreen}{f_{1}}} ="0",
(125,50) *+{\color{darkspringgreen}{f_n}} ="1",
(175,150) *+{\color{darkspringgreen}{f_{n-1}}} ="2",
(175,0) *+{\color{blue}{f_n'}} ="3",
(325,50) *+{\color{darkspringgreen}{f_{1}}} ="4",
(450,50) *+{\color{red}{f_n}} ="5",
(500,150) *+{\color{darkspringgreen}{f_{n-1}}} ="6",
(500,0) *+{\color{blue}{f_n'}} ="7",
"0", {\ar"1"},
"1", {\ar"2"},
"1", {\ar"3"},
"5", {\ar"4"},
"4", {\ar"6"},
"4", {\ar"7"},
"6", {\ar"5"},
"7", {\ar"5"},
\end{xy}\]
\vspace{15pt} 

At this point the only arrow with target $f_{n-1}$ is the arrow $f_{1} \rightarrow f_{n-1}$. The next mutation is at $f_{n-1}$. After completing this mutation we end up with the diagram below on the left. If we focus only on the subquiver where we delete the vertices $f_n$ and $f_n'$ we obtain the diagram below on the right. It gives us the same diagram that resulted from our mutation at $f_n  $, but we have shifted the  index down one.

\[\begin{xy} 0;<.5pt,0pt>:<0pt,-.5pt>:: 
(0,50) *+{\color{darkspringgreen}{f_{1}}} ="0",
(125,75) *+{\color{red}{f_n}} ="1",
(225,125) *+{\color{red}{f_{n-1}}} ="2",
(250,225) *+{\color{darkspringgreen}{f_{n-2}}} ="3",
(150,0) *+{\color{blue}{f_n'}}="4",
(300,75) *+{\color{blue}{f_{n-1}'}}="5",
(375,50) *+{\color{darkspringgreen}{f_{1}}} ="6",
(625,125) *+{\color{red}{f_{n-1}}} ="7",
(650,225) *+{\color{darkspringgreen}{f_{n-2}}} ="8",
(700,75) *+{\color{blue}{f_{n-1}'}} ="9",
"2", {\ar"0"},
"0", {\ar"3"},
"0", {\ar"4"},
"0", {\ar"5"},
"1", {\ar"2"},
"4", {\ar"1"},
"3", {\ar"2"},
"5", {\ar"2"},
"7", {\ar"6"},
"6", {\ar"8"},
"6", {\ar"9"},
"8", {\ar"7"},
"9", {\ar"7"},
\end{xy}\]

The most important thing to notice is that each previously mutated vertex remains red, while the only arrows created between the frozen vertices, $\{f_j'\}$, and the mutable vertices, $\{f_j\}$, are the arrows $\{f_1 \rightarrow f_j'\}$. The other important thing to make note of is that the mutation $\mu_{f_{n-1}}$ deletes the arrow $f_1 \leftarrow f_{n}$ which was created by the mutation before it. Since $f_n$ is not adjacent to $f_{n-2}$, and the resulting diagram we get by removing $\{f_n, f_n'\}$ is the same as the previous diagram but with an index shift, we know that this pattern will continue to hold for the mutations $\mu_{n-2},\dots,\mu_{3}$. The resulting quiver $\mu_{f_3}\circ\mu_{f_4}\circ \cdots \circ \mu_{f_n}(Q)$ will be the following:

\[\begin{xy} 0;<.5pt,0pt>:<0pt,-.5pt>:: 
(200,75) *+{\color{darkspringgreen}{f_{1}}} ="0",
(300,100) *+{\color{red}{f_{n}}} ="1",
(325,200) *+{\color{red}{f_{n-1}}}="2",
(300,300) *+{\color{red}{f_{n-2}}}="3",
(200,325) *+{\color{red}{f_{n-3}}}="4",
(100,300) *+{\color{red}{f_{n-4}}}="5",
(75,200) *+{\color{red}{f_3}} ="6",
(100,100) *+{\color{darkspringgreen}{f_{2}}} ="7",
(50,50) *+{\color{blue}{f_{2}'}} ="8",
(200,0) *+{\color{blue}{f_{1}'}} ="9",
(350,50) *+{\color{blue}{f_{n}'}}="10",
(400,200) *+{\color{blue}{f_{n-1}'}} ="11",
(350,350) *+{\color{blue}{f_{n-2}'}} ="12",
(250,375) *+{\color{blue}{f_{n-3}'}} ="13",
(50,350) *+{\color{blue}{f_{n-4}'}} ="14",
(0,200) *+{\color{blue}{f_{3}'}} ="15",
"6", {\ar"0"},
"0", {\ar"9"},
"0", {\ar"10"},
"0", {\ar"11"},
"0", {\ar"12"},
"0", {\ar"13"},
"0", {\ar"14"},
"0", {\ar"15"},
"1", {\ar"2"},
"10", {\ar"1"},
"2", {\ar"3"},
"11", {\ar"2"},
"3", {\ar"4"},
"12", {\ar"3"},
"4", {\ar"5"},
"13", {\ar"4"},
"5", {\ar@{.>}"6"},
"14", {\ar"5"},
"7", {\ar"6"},
"15", {\ar"6"},
"7", {\ar"8"},
\end{xy}\]

We can see by looking at the above picture that when we perform the next mutation, no additional arrows will be created from step $(1)$ of the mutation process. Hence the only impact on the quiver will be the reversing of arrows which are incident to the vertex $f_2$.

\[\begin{xy} 0;<.5pt,0pt>:<0pt,-.5pt>:: 
(200,75) *+{\color{darkspringgreen}{f_{1}}} ="0",
(300,100) *+{\color{red}{f_{n}}} ="1",
(325,200) *+{\color{red}{f_{n-1}}}="2",
(300,300) *+{\color{red}{f_{n-2}}}="3",
(200,325) *+{\color{red}{f_{n-3}}}="4",
(100,300) *+{\color{red}{f_{n-4}}}="5",
(75,200) *+{\color{red}{f_3}} ="6",
(100,100) *+{\color{red}{f_{2}}} ="7",
(50,50) *+{\color{blue}{f_{2}'}} ="8",
(200,0) *+{\color{blue}{f_{1}'}} ="9",
(350,50) *+{\color{blue}{f_{n}'}}="10",
(400,200) *+{\color{blue}{f_{n-1}'}} ="11",
(350,350) *+{\color{blue}{f_{n-2}'}} ="12",
(250,375) *+{\color{blue}{f_{n-3}'}} ="13",
(50,350) *+{\color{blue}{f_{n-4}'}} ="14",
(0,200) *+{\color{blue}{f_{3}'}} ="15",
"6", {\ar"0"},
"0", {\ar"9"},
"0", {\ar"10"},
"0", {\ar"11"},
"0", {\ar"12"},
"0", {\ar"13"},
"0", {\ar"14"},
"0", {\ar"15"},
"1", {\ar"2"},
"10", {\ar"1"},
"2", {\ar"3"},
"11", {\ar"2"},
"3", {\ar"4"},
"12", {\ar"3"},
"4", {\ar"5"},
"13", {\ar"4"},
"5", {\ar@{.>}"6"},
"14", {\ar"5"},
"6", {\ar"7"},
"15", {\ar"6"},
"8", {\ar"7"},
\end{xy}\] 

Now if we consider the current state of the quiver, there is only one vertex which is green, $f_1$. We notice that the only arrow with target $f_1$ is the arrow $f_3 \rightarrow f_1$. Therefore step $(1)$ of the mutation $\mu_{f_1}$ will only create arrows with source $f_3$. Hence the only possible vertex which could shift from red to green is $f_3$, and in fact $f_3$ will become green. The result of the mutation will be creating the arrows $\{f_3\rightarrow f_j' \text{ }| \text{ }j=n,n-1,n-2,\dots,4\text{ and } j=1\}$. It will also delete the arrow $f_3' \rightarrow f_3$.

\[
\begin{xy} 0;<.5pt,0pt>:<0pt,-.5pt>:: 
(200,75) *+{\color{red}{f_{1}}} ="0",
(300,100) *+{\color{red}{f_{n}}} ="1",
(325,200) *+{\color{red}{f_{n-1}}}="2",
(300,300) *+{\color{red}{f_{n-2}}}="3",
(200,325) *+{\color{red}{f_{n-3}}}="4",
(100,300) *+{\color{red}{f_{n-4}}}="5",
(75,200) *+{\color{darkspringgreen}{f_3}} ="6",
(100,100) *+{\color{red}{f_{2}}} ="7",
(50,50) *+{\color{blue}{f_{2}'}} ="8",
(200,0) *+{\color{blue}{f_{1}'}} ="9",
(350,50) *+{\color{blue}{f_{n}'}}="10",
(400,200) *+{\color{blue}{f_{n-1}'}} ="11",
(350,350) *+{\color{blue}{f_{n-2}'}} ="12",
(250,375) *+{\color{blue}{f_{n-3}'}} ="13",
(50,350) *+{\color{blue}{f_{n-4}'}} ="14",
(0,200) *+{\color{blue}{f_{3}'}} ="15",
"0", {\ar"6"},
"9", {\ar"0"},
"10", {\ar"0"},
"11", {\ar"0"},
"12", {\ar"0"},
"13", {\ar"0"},
"14", {\ar"0"},
"15", {\ar"0"},
"1", {\ar"2"},
"10", {\ar"1"},
"2", {\ar"3"},
"11", {\ar"2"},
"3", {\ar"4"},
"12", {\ar"3"},
"4", {\ar"5"},
"13", {\ar"4"},
"5", {\ar@{.>}"6"},
"14", {\ar"5"},
"6", {\ar"7"},
"8", {\ar"7"},
"6", {\ar"9"},
"6", {\ar"10"},
"6", {\ar"11"},
"6", {\ar"12"},
"6", {\ar"13"},
"6", {\ar"14"},
\end{xy}\]

We now are forced to mutate at our only green vertex in the quiver. Step $(1)$ of $\mu_{f_3}$  creates the arrows $\{f_1 \rightarrow f_i'\text{ }| \text{ }i=4,5,\dots,n, \text{ and }i=1\}$, but step $(3)$ will delete these arrows since the arrows $\{f_1 \leftarrow f_i'\text{ }| \text{ }i=4,5,\dots,n, \text{ and }i=1\}$ already exist in our current state of the quiver. Meaning the vertex $f_1$ will remain red. The only other arrow whose target is $f_3$ is $f_4 \rightarrow f_3$, so the only vertex which could possibly turn from red to green is $f_4$ and this will occur. Step $(1)$ of the mutation $\mu_{f_3}$ will create the arrows $\{f_4 \rightarrow f_i' \text{ } | \text{ }i=4,5,6,\dots,n \text{ and } i=1\}$, but step $(3)$ will delete the arrow $f_4 \rightarrow f_4'$, because the arrow $f_4' \rightarrow f_4$ is already in the quiver prior to this mutation. 

\[
\begin{xy} 0;<.7pt,0pt>:<0pt,-.7pt>:: 
(150,50) *+{\color{red}{f_{1}}} ="0",
(75,75) *+{\color{red}{f_{2}}}="1",
(225,75) *+{\color{red}{f_{n}}} ="2",
(50,150) *+{\color{red}{f_3}} ="3",
(75,225) *+{\color{darkspringgreen}{f_{4}}} ="4",
(150,250) *+{\color{red}{f_{n-3}}}="5",
(225,225) *+{\color{red}{f_{n-2}}}="6",
(250,150) *+{\color{red}{f_{n-1}}}="7",
(150,0) *+{\color{blue}{f_{1}'}} ="8",
(25,25) *+{\color{blue}{f_{2}'}}="9",
(0,150) *+{\color{blue}{f_{3}'}}="10",
(25,275) *+{\color{blue}{f_{4}'}} ="11",
(200,300) *+{\color{blue}{f_{n-3}'}} ="12",
(275,275) *+{\color{blue}{f_{n-2}'}} ="13",
(300,150) *+{\color{blue}{f_{n-1}'}}  ="14",
(250,25) *+{\color{blue}{f_{n}'}} ="15",
"0", {\ar"1"},
"3", {\ar"0"},
"10", {\ar"0"},
"1", {\ar"3"},
"4", {\ar"1"},
"9", {\ar"1"},
"2", {\ar"7"},
"15", {\ar"2"},
"3", {\ar"4"},
"8", {\ar"3"},
"11", {\ar"3"},
"12", {\ar"3"},
"13", {\ar"3"},
"14", {\ar"3"},
"15", {\ar"3"},
"5", {\ar@{.>}"4"},
"4", {\ar"8"},
"4", {\ar"12"},
"4", {\ar"13"},
"4", {\ar"14"},
"4", {\ar"15"},
"6", {\ar"5"},
"12", {\ar"5"},
"7", {\ar"6"},
"13", {\ar"6"},
"14", {\ar"7"},
\end{xy}\]

Our next mutation is then forced to be $\mu_{f_4}$ because it is the only green vertex in the quiver. The only arrows with target $f_4$, are the arrows $f_3 \rightarrow f_4$ and $f_5 \rightarrow f_4$. First let us consider the arrows created with source $f_3$. Step $(1)$ of the mutation process will create the arrows $\{f_3 \rightarrow f_i'\text{ } | \text{ } i=5,6,7,\dots,n \text{ and } i=1\}$. It also creates the arrow $f_3 \rightarrow f_2$. All of these arrows will be deleted by step $(3)$ of the mutation process. This means that no new outgoing arrows are created with source $f_3$, therefore $f_3$ remains a red vertex after mutation. Now we consider the arrows with source $f_5$, which are created by the mutation $\mu_{f_4}$. The arrows created are $\{f_5 \rightarrow f_i'\text{ } | \text{ } i=5,6,7,\dots,n \text{ and } i=1\}$, but the arrow $f_5 \rightarrow f_5'$ is deleted by step $(3)$ of the mutation process.

\[\begin{xy} 0;<.7pt,0pt>:<0pt,-.7pt>:: 
(150,50) *+{\color{red}{f_{1}}}="0",
(75,75) *+{\color{red}{f_{2}}}="1",
(50,150) *+{\color{red}{f_{3}}}="2",
(74,228) *+{\color{red}{f_{4}}} ="3",
(150,250) *+{\color{darkspringgreen}{f_{5}}} ="4",
(225,225) *+{\color{red}{f_{n-2}}} ="5",
(250,150) *+{\color{red}{f_{n-1}}} ="6",
(225,75) *+{\color{red}{f_{n}}} ="7",
(200,275) *+{\color{blue}{f_{5}'}} ="8",
(275,250) *+{\color{blue}{f_{n-2}'}} ="9",
(25,250) *+{\color{blue}{f_{4}'}}="10",
(0,150) *+{\color{blue}{f_{3}'}}="11",
(25,25) *+{\color{blue}{f_{2}'}}="12",
(150,0) *+{\color{blue}{f_{1}'}} ="13",
(275,25) *+{\color{blue}{f_{n}'}}="14",
(300,150) *+{\color{blue}{f_{n-1}'}}="15",
"0", {\ar"1"},
"2", {\ar"0"},
"11", {\ar"0"},
"1", {\ar"3"},
"4", {\ar"1"},
"12", {\ar"1"},
"3", {\ar"2"},
"10", {\ar"2"},
"3", {\ar"4"},
"8", {\ar"3"},
"9", {\ar"3"},
"13", {\ar"3"},
"14", {\ar"3"},
"15", {\ar"3"},
"5", {\ar@{.>}"4"},
"4", {\ar"9"},
"4", {\ar"13"},
"4", {\ar"14"},
"4", {\ar"15"},
"6", {\ar"5"},
"9", {\ar"5"},
"7", {\ar"6"},
"15", {\ar"6"},
"14", {\ar"7"},
\end{xy}\]

If we continue this pattern what we are seeing is that by mutating at $f_i$ we are deleting all of the currently existing arrows $\{f_j' \rightarrow f_{i-1}\text{ }|\text{ } j\neq i\}$ and we are creating the arrows $\{f_{i+1} \rightarrow f_j'\text{ }|\text{ } j=i+2,i+3,\dots n \text{ and } j=1\}$. This means at each mutation step, $\mu_{f_i}$, the only vertex which will turn green is $f_{i+1}$. Essentially we are transferring all the outgoing arrows from the vertex $f_i$ to the vertex $f_{i+1}$. This process continues for each mutation in the sequence until the last mutation step. Lets look at the quiver right before this mutation step, $\mu_{f_{n-1}}\circ\mu_{f_{n-2}}\circ \cdots \circ\mu_{f_3}\circ\mu_{f_1}\circ\mu_{f_2}\circ \cdots \circ \mu_{f_n}(Q)$.

\[\begin{xy} 0;<.7pt,0pt>:<0pt,-.7pt>:: 
(150,50) *+{\color{red}{f_{1}}}="0",
(75,75) *+{\color{red}{f_{2}}}="1",
(50,150) *+{\color{red}{f_{3}}}="2",
(74,228) *+{\color{red}{f_{4}}} ="3",
(150,250) *+{\color{red}{f_{5}}} ="4",
(225,225) *+{\color{red}{f_{n-2}}} ="5",
(250,150) *+{\color{red}{f_{n-1}}} ="6",
(225,75) *+{\color{darkspringgreen}{f_{n}}} ="7",
(200,275) *+{\color{blue}{f_{5}'}} ="8",
(275,250) *+{\color{blue}{f_{n-2}'}} ="9",
(25,250) *+{\color{blue}{f_{4}'}}="10",
(0,150) *+{\color{blue}{f_{3}'}}="11",
(25,25) *+{\color{blue}{f_{2}'}}="12",
(150,0) *+{\color{blue}{f_{1}'}} ="13",
(275,25) *+{\color{blue}{f_{n}'}}="14",
(300,150) *+{\color{blue}{f_{n-1}'}}="15",
"0", {\ar"1"},
"2", {\ar"0"},
"11", {\ar"0"},
"1", {\ar"6"},
"7", {\ar"1"},
"12", {\ar"1"},
"3", {\ar"2"},
"10", {\ar"2"},
"4", {\ar"3"},
"8", {\ar"3"},
"5", {\ar@{.>}"4"},
"9", {\ar"4"},
"6", {\ar"5"},
"15", {\ar"5"},
"6", {\ar"7"},
"13", {\ar"6"},
"14", {\ar"6"},
"7", {\ar"13"},
\end{xy}\]

At this point step $(1)$ of the final mutation in the sequence, will create only the arrows $f_{n-1} \rightarrow f_1'$ and $f_{n-1} \rightarrow f_2$, both of which will be deleted by step $(3)$ of the mutation. Therefore no vertex which is red can become green, meaning that all of the vertices are red. Hence, the seqence of mutations is a maximal green sequence.

\[\begin{xy} 0;<.7pt,0pt>:<0pt,-.7pt>:: 
(150,50) *+{\color{red}{f_{1}}}="0",
(75,75) *+{\color{red}{f_{2}}}="1",
(50,150) *+{\color{red}{f_{3}}}="2",
(74,228) *+{\color{red}{f_{4}}} ="3",
(150,250) *+{\color{red}{f_{5}}} ="4",
(225,225) *+{\color{red}{f_{n-2}}} ="5",
(250,150) *+{\color{red}{f_{n-1}}} ="6",
(225,75) *+{\color{red}{f_{n}}} ="7",
(200,275) *+{\color{blue}{f_{5}'}} ="8",
(275,250) *+{\color{blue}{f_{n-2}'}} ="9",
(25,250) *+{\color{blue}{f_{4}'}}="10",
(0,150) *+{\color{blue}{f_{3}'}}="11",
(25,25) *+{\color{blue}{f_{2}'}}="12",
(150,0) *+{\color{blue}{f_{1}'}} ="13",
(275,25) *+{\color{blue}{f_{n}'}}="14",
(300,150) *+{\color{blue}{f_{n-1}'}}="15",
"0", {\ar"1"},
"2", {\ar"0"},
"11", {\ar"0"},
"1", {\ar"7"},
"12", {\ar"1"},
"3", {\ar"2"},
"10", {\ar"2"},
"4", {\ar"3"},
"8", {\ar"3"},
"5", {\ar@{.>}"4"},
"9", {\ar"4"},
"6", {\ar"5"},
"15", {\ar"5"},
"7", {\ar"6"},
"14", {\ar"6"},
"13", {\ar"7"},
\end{xy}\]

\end{proof}
The important thing to notice about this sequence is that we pick a starting point and mutate in direction of the cycle until we hit the end of the cycle. At this point we run the mutation sequence backwards from the ending point, but we skip the first two steps of the mutation. We will make use of this sequence again later on in the proof.

Now we must consider what this portion of the sequence does to the rest of the quiver $Q_{T_n}$. Mutation is a local property which only affects adjacent vertices, and since this mutation sequence only involves the vertices $\{f_i\}$ the only vertices that can be affected by the sequence are the vertices $\{f_i\} \cup \{e_i\}$. From the lemma we know that the first part of our sequence, $(f_n,f_{n-1},\dots,f_1,f_3,f_4,\dots f_n),$ is green and that after performing this sequence of mutations all of the vertices $f_i\text { for } 1\leq i \leq n$, will be red. We must now look at what effect the sequence of mutations has on $\{e_i\}$. So we will look at a diagram of the quiver with the vertices $\{e_i \text{ }| \text{ } 1=1,2,\dots,n\}$ drawn in.

\[\begin{xy} 0;<.4pt,0pt>:<0pt,-.4pt>:: 
(50,300) *+{\color{darkspringgreen}{e_1}} ="0",
(225,300) *+{\color{darkspringgreen}{e_2}} ="1",
(400,300) *+{\color{darkspringgreen}{e_3}} ="2",
(50,75) *+{\color{darkspringgreen}{f_1}} ="3",
(225,225) *+{\color{darkspringgreen}{f_2}} ="4",
(400,225) *+{\color{darkspringgreen}{f_3}} ="5",
(50,375) *+{\color{blue}{e_1'}} ="6",
(225,375) *+{\color{blue}{e_2'}} ="7",
(400,375) *+{\color{blue}{e_3'}} ="8",
(0,0) *+{\color{blue}{f_{1}'}}  ="9",
(225,125) *+{\color{blue}{f_{2}'}}  ="10",
(400,125) *+{\color{blue}{f_{3}'}}  ="11",
(575,300) *+{\color{darkspringgreen}{e_{n-1}}} ="12",
(575,225) *+{\color{darkspringgreen}{f_{n-1}}} ="13",
(750,300) *+{\color{darkspringgreen}{e_n}}="14",
(750,75) *+{\color{darkspringgreen}{f_n}} ="15",
(575,125) *+{\color{blue}{f_{n-1}'}} ="16",
(575,375) *+{\color{blue}{e_{n-1}'}} ="17",
(750,375) *+{\color{blue}{e_n'}}="18",
(800,0) *+{\color{blue}{f_{n}'}}  ="19",
"0", {\ar"3"},
"0", {\ar"6"},
"15", {\ar"0"},
"3", {\ar"1"},
"1", {\ar"4"},
"1", {\ar"7"},
"4", {\ar"2"},
"2", {\ar"5"},
"2", {\ar"8"},
"4", {\ar"3"},
"3", {\ar"9"},
"3", {\ar"15"},
"5", {\ar"4"},
"4", {\ar"10"},
"5", {\ar"11"},
"5", {\ar@{.>}"12"},
"13", {\ar@{.>}"5"},
"12", {\ar"13"},
"12", {\ar"17"},
"13", {\ar"14"},
"15", {\ar"13"},
"13", {\ar"16"},
"14", {\ar"15"},
"14", {\ar"18"},
"15", {\ar"19"},
\end{xy}\]

We see that the initial mutation $\mu_{f_n}$ will result in creating the arrows $e_n \rightarrow f_n'$ and $e_n \rightarrow e_1$. It will delete the arrow $f_{n-1} \rightarrow e_n$. This leaves the vertex $e_n$ not adjacent to any vertex $f_i$, for any $i\neq n$. Meaning that since our sequence consists only of mutations at the vertices $f_i$ until the vertex $f_n$ is mutated at we cannot create new arrows involving $e_n$.

\[\begin{xy} 0;<.4pt,0pt>:<0pt,-.4pt>:: 
(50,300) *+{\color{darkspringgreen}{e_1}} ="0",
(225,300) *+{\color{darkspringgreen}{e_2}} ="1",
(400,300) *+{\color{darkspringgreen}{e_3}} ="2",
(50,75) *+{\color{darkspringgreen}{f_1}} ="3",
(225,225) *+{\color{darkspringgreen}{f_2}} ="4",
(400,225) *+{\color{darkspringgreen}{f_3}} ="5",
(50,375) *+{\color{blue}{e_1'}} ="6",
(225,375) *+{\color{blue}{e_2'}} ="7",
(400,375) *+{\color{blue}{e_3'}} ="8",
(0,0) *+{\color{blue}{f_{1}'}}  ="9",
(225,125) *+{\color{blue}{f_{2}'}}  ="10",
(400,125) *+{\color{blue}{f_{3}'}}  ="11",
(575,300) *+{\color{darkspringgreen}{e_{n-1}}} ="12",
(575,225) *+{\color{darkspringgreen}{f_{n-1}}} ="13",
(750,300) *+{\color{darkspringgreen}{e_n}}="14",
(750,75) *+{\color{red}{f_n}} ="15",
(575,125) *+{\color{blue}{f_{n-1}'}} ="16",
(575,375) *+{\color{blue}{e_{n-1}'}} ="17",
(750,375) *+{\color{blue}{e_n'}}="18",
(800,0) *+{\color{blue}{f_{n}'}}  ="19",
"0", {\ar"6"},
"14", {\ar"0"},
"0", {\ar"15"},
"3", {\ar"1"},
"1", {\ar"4"},
"1", {\ar"7"},
"4", {\ar"2"},
"2", {\ar"5"},
"2", {\ar"8"},
"4", {\ar"3"},
"3", {\ar"9"},
"3", {\ar"13"},
"15", {\ar"3"},
"3", {\ar"19"},
"5", {\ar"4"},
"4", {\ar"10"},
"5", {\ar"11"},
"5", {\ar@{.>}"12"},
"13", {\ar@{.>}"5"},
"12", {\ar"13"},
"12", {\ar"17"},
"13", {\ar"15"},
"13", {\ar"16"},
"15", {\ar"14"},
"14", {\ar"18"},
"14", {\ar"19"},
"19", {\ar"15"},
\end{xy}\]

The next mutation $\mu_{f_{n-1}}$ will create the arrows $e_{n-1} \rightarrow f_{n-1}'$ and $e_{n-1} \rightarrow f_n$. It will also delete the arrow $f_{n-2} \rightarrow e_{n-1}$. In general the mutation step $\mu_{i}$ will create the arrows $e_i \rightarrow f_i'$ and $e_i \rightarrow f_{i+1}$, while deleting the arrow $f_{i-1} \rightarrow e_i$. This pattern holds until we have arrived at the quiver $\mu_{f_3}\circ \mu_{f_4} \circ \cdots \circ \mu_{f_n}(Q_{T_n})$.

\[\begin{xy} 0;<.4pt,0pt>:<0pt,-.4pt>:: 
(50,300) *+{\color{darkspringgreen}{e_1}} ="0",
(225,300) *+{\color{darkspringgreen}{e_2}} ="1",
(400,300) *+{\color{darkspringgreen}{e_{3}}} ="2",
(50,75) *+{\color{darkspringgreen}{f_1}} ="3",
(225,225) *+{\color{darkspringgreen}{f_2}} ="4",
(400,225) *+{\color{red}{f_{3}}} ="5",
(50,375) *+{\color{blue}{e_1'}} ="6",
(225,375) *+{\color{blue}{e_2'}} ="7",
(400,375) *+{\color{blue}{e_{3}'}} ="8",
(0,0) *+{\color{blue}{f_{1}'}}  ="9",
(270,125) *+{\color{blue}{f_{2}'}}  ="10",
(445,125) *+{\color{blue}{f_{{3}}'}}  ="11",
(575,300) *+{\color{darkspringgreen}{e_{n-1}}} ="12",
(575,225) *+{\color{red}{f_{n-1}}} ="13",
(750,300) *+{\color{darkspringgreen}{e_n}}="14",
(750,75) *+{\color{red}{f_n}} ="15",
(620,125) *+{\color{blue}{f_{n-1}'}} ="16",
(575,375) *+{\color{blue}{e_{n-1}'}} ="17",
(750,375) *+{\color{blue}{e_n'}}="18",
(800,0) *+{\color{blue}{f_{n}'}}  ="19",
"0", {\ar"6"},
"14", {\ar"0"},
"0", {\ar"15"},
"3", {\ar"1"},
"1", {\ar"4"},
"1", {\ar"7"},
"5", {\ar"2"},
"2", {\ar"8"},
"2", {\ar"11"},
"2", {\ar@{.>}"13"},
"5", {\ar"3"},
"3", {\ar"9"},
"3", {\ar"11"},
"3", {\ar"16"},
"3", {\ar"19"},
"4", {\ar"5"},
"4", {\ar"10"},
"11", {\ar"5"},
"13", {\ar@{.>}"5"},
"13", {\ar"12"},
"12", {\ar"15"},
"12", {\ar"16"},
"12", {\ar"17"},
"15", {\ar"13"},
"16", {\ar"13"},
"15", {\ar"14"},
"14", {\ar"18"},
"14", {\ar"19"},
"19", {\ar"15"},
\end{xy}\]
Now we see that at this stage in the mutation sequence we do not have the arrow $f_2 \rightarrow f_1$, and so our next mutation $\mu_{f_2}$ will only create the edges $e_2 \rightarrow f_{3}$ and $e_2 \rightarrow f_2'$. 

\[\begin{xy} 0;<.4pt,0pt>:<0pt,-.4pt>:: 
(50,300) *+{\color{darkspringgreen}{e_1}} ="0",
(225,300) *+{\color{darkspringgreen}{e_2}} ="1",
(400,300) *+{\color{darkspringgreen}{e_{3}}} ="2",
(50,75) *+{\color{darkspringgreen}{f_1}} ="3",
(225,225) *+{\color{red}{f_2}} ="4",
(400,225) *+{\color{red}{f_{3}}} ="5",
(50,375) *+{\color{blue}{e_1'}} ="6",
(225,375) *+{\color{blue}{e_2'}} ="7",
(400,375) *+{\color{blue}{e_{3}'}} ="8",
(0,0) *+{\color{blue}{f_{1}'}}  ="9",
(270,125) *+{\color{blue}{f_{2}'}}  ="10",
(445,125) *+{\color{blue}{f_{{3}}'}}  ="11",
(575,300) *+{\color{darkspringgreen}{e_{n-1}}} ="12",
(575,225) *+{\color{red}{f_{n-1}}} ="13",
(750,300) *+{\color{darkspringgreen}{e_n}}="14",
(750,75) *+{\color{red}{f_n}} ="15",
(620,125) *+{\color{blue}{f_{n-1}'}} ="16",
(575,375) *+{\color{blue}{e_{n-1}'}} ="17",
(750,375) *+{\color{blue}{e_n'}}="18",
(800,0) *+{\color{blue}{f_{n}'}}  ="19",
"0", {\ar"6"},
"14", {\ar"0"},
"0", {\ar"15"},
"3", {\ar"1"},
"4", {\ar"1"},
"1", {\ar"5"},
"1", {\ar"7"},
"1", {\ar"10"},
"5", {\ar"2"},
"2", {\ar"8"},
"2", {\ar"11"},
"2", {\ar@{.>}"13"},
"5", {\ar"3"},
"3", {\ar"9"},
"3", {\ar"11"},
"3", {\ar"16"},
"3", {\ar"19"},
"5", {\ar"4"},
"10", {\ar"4"},
"11", {\ar"5"},
"13", {\ar@{.>}"5"},
"13", {\ar"12"},
"12", {\ar"15"},
"12", {\ar"16"},
"12", {\ar"17"},
"15", {\ar"13"},
"16", {\ar"13"},
"15", {\ar"14"},
"14", {\ar"18"},
"14", {\ar"19"},
"19", {\ar"15"},
\end{xy}\]

Next, we look at what occurs when we perform the mutation $\mu_{f_1}$. Step $(1)$ of this mutation will create the arrow $e_2 \leftarrow f_3$, but step $(3)$ will delete this arrow because the quiver already has the arrow $e_2 \rightarrow f_3$.

\[\begin{xy} 0;<.4pt,0pt>:<0pt,-.4pt>:: 
(50,300) *+{\color{darkspringgreen}{e_1}} ="0",
(225,300) *+{\color{darkspringgreen}{e_2}} ="1",
(400,300) *+{\color{darkspringgreen}{e_{3}}} ="2",
(50,75) *+{\color{red}{f_1}} ="3",
(225,225) *+{\color{red}{f_2}} ="4",
(400,225) *+{\color{darkspringgreen}{f_{3}}} ="5",
(50,375) *+{\color{blue}{e_1'}} ="6",
(225,375) *+{\color{blue}{e_2'}} ="7",
(400,375) *+{\color{blue}{e_{3}'}} ="8",
(0,0) *+{\color{blue}{f_{1}'}}  ="9",
(275,125) *+{\color{blue}{f_{2}'}}  ="10",
(450,125) *+{\color{blue}{f_{{3}}'}}  ="11",
(575,300) *+{\color{darkspringgreen}{e_{n-1}}} ="12",
(575,225) *+{\color{red}{f_{n-1}}} ="13",
(750,300) *+{\color{darkspringgreen}{e_n}}="14",
(750,75) *+{\color{red}{f_n}} ="15",
(625,125) *+{\color{blue}{f_{n-1}'}} ="16",
(575,375) *+{\color{blue}{e_{n-1}'}} ="17",
(750,375) *+{\color{blue}{e_n'}}="18",
(800,0) *+{\color{blue}{f_{n}'}}  ="19",
"0", {\ar"6"},
"14", {\ar"0"},
"0", {\ar"15"},
"1", {\ar"3"},
"4", {\ar"1"},
"1", {\ar"7"},
"1", {\ar"10"},
"5", {\ar"2"},
"2", {\ar"8"},
"2", {\ar"11"},
"2", {\ar@{.>}"13"},
"3", {\ar"5"},
"9", {\ar"3"},
"11", {\ar"3"},
"16", {\ar"3"},
"19", {\ar"3"},
"5", {\ar"4"},
"10", {\ar"4"},
"5", {\ar"9"},
"13", {\ar@{.>}"5"},
"5", {\ar@{.>}"16"},
"5", {\ar"19"},
"13", {\ar"12"},
"12", {\ar"15"},
"12", {\ar"16"},
"12", {\ar"17"},
"15", {\ar"13"},
"16", {\ar"13"},
"15", {\ar"14"},
"14", {\ar"18"},
"14", {\ar"19"},
"19", {\ar"15"},
\end{xy}\]

Now we notice that at this stage the vertex $e_2$ is not adjacent to any vertices that will be mutated during the remainder of our sequence. Therefore its current arrows will not be affected by the sequence. As we continue performing the mutations of this occurs for each $e_i$ for $i=2,3,\dots n$. More specifically, after the mutation $\mu_{f_i}$ the arrows incident to the vertex $e_i$ will be fixed for the remainder of the mutations in the sequence. This pattern continues until we have the quiver, $\mu_{f_{n-1}}\circ \mu_{f_{n-2}}\circ \cdots \circ \mu_{f_3} \circ \mu_{f_1} \circ \cdots \circ \mu_{f_n}(Q_{T_n})$.

\[\begin{xy} 0;<.4pt,0pt>:<0pt,-.4pt>:: 
(50,300) *+{\color{darkspringgreen}{e_1}} ="0",
(225,300) *+{\color{darkspringgreen}{e_2}} ="1",
(400,335) *+{\color{darkspringgreen}{e_{3}}} ="2",
(50,75) *+{\color{red}{f_1}} ="3",
(225,225) *+{\color{red}{f_2}} ="4",
(400,260) *+{\color{red}{f_{3}}} ="5",
(50,375) *+{\color{blue}{e_1'}} ="6",
(225,375) *+{\color{blue}{e_2'}} ="7",
(400,410) *+{\color{blue}{e_{3}'}} ="8",
(0,0) *+{\color{blue}{f_{1}'}}  ="9",
(270,125) *+{\color{blue}{f_{2}'}}  ="10",
(445,125) *+{\color{blue}{f_{{3}}'}}  ="11",
(575,300) *+{\color{darkspringgreen}{e_{n-1}}} ="12",
(575,225) *+{\color{red}{f_{n-1}}} ="13",
(750,300) *+{\color{darkspringgreen}{e_n}}="14",
(750,75) *+{\color{darkspringgreen}{f_n}} ="15",
(620,125) *+{\color{blue}{f_{n-1}'}} ="16",
(575,375) *+{\color{blue}{e_{n-1}'}} ="17",
(750,375) *+{\color{blue}{e_n'}}="18",
(800,0) *+{\color{blue}{f_{n}'}}  ="19",
"0", {\ar"6"},
"14", {\ar"0"},
"0", {\ar"15"},
"1", {\ar"3"},
"4", {\ar"1"},
"1", {\ar"7"},
"1", {\ar"10"},
"3", {\ar"2"},
"2", {\ar"5"},
"2", {\ar"8"},
"2", {\ar"11"},
"3", {\ar"4"},
"5", {\ar"3"},
"11", {\ar"3"},
"10", {\ar"4"},
"4", {\ar"13"},
"15", {\ar"4"},
"5", {\ar@{.>}"12"},
"13", {\ar@{.>}"5"},
"16", {\ar@{.>}"5"},
"9", {\ar"13"},
"15", {\ar"9"},
"12", {\ar"13"},
"12", {\ar"16"},
"12", {\ar"17"},
"13", {\ar"15"},
"19", {\ar"13"},
"15", {\ar"14"},
"14", {\ar"18"},
"14", {\ar"19"},
\end{xy}\]

Now if we look at the final mutation $\mu_{f_n}$, the net result from the mutation will be creating the arrow $f_{n-1} \rightarrow e_n$ and deleting this arrow $e_n\rightarrow e_1$. It will also create the arrow $e_1\rightarrow f_1'$. The end result is a quiver that up to permuting the vertices $f_1$ and $f_2$, we have the same structure that we had prior to the sequence with the following exceptions: the arrows $\{e_i\rightarrow f_i' \text{ } | \text{ } 1\leq i \leq n\}$ are now in the quiver and the vertices $f_i$ are all red instead of green.

\[\begin{xy} 0;<.4pt,0pt>:<0pt,-.4pt>:: 
(50,300) *+{\color{darkspringgreen}{e_1}} ="0",
(225,300) *+{\color{darkspringgreen}{e_2}} ="1",
(400,300) *+{\color{darkspringgreen}{e_{3}}} ="2",
(50,75) *+{\color{red}{f_1}} ="3",
(225,225) *+{\color{red}{f_2}} ="4",
(400,225) *+{\color{red}{f_{3}}} ="5",
(50,375) *+{\color{blue}{e_1'}} ="6",
(225,375) *+{\color{blue}{e_2'}} ="7",
(400,375) *+{\color{blue}{e_{3}'}} ="8",
(0,0) *+{\color{blue}{f_{1}'}}  ="9",
(270,125) *+{\color{blue}{f_{2}'}}  ="10",
(445,125) *+{\color{blue}{f_{{3}}'}}  ="11",
(575,300) *+{\color{darkspringgreen}{e_{n-1}}} ="12",
(575,225) *+{\color{red}{f_{n-1}}} ="13",
(750,300) *+{\color{darkspringgreen}{e_n}}="14",
(750,75) *+{\color{red}{f_n}} ="15",
(620,125) *+{\color{blue}{f_{n-1}'}} ="16",
(575,375) *+{\color{blue}{e_{n-1}'}} ="17",
(750,375) *+{\color{blue}{e_n'}}="18",
(800,0) *+{\color{blue}{f_{n}'}}  ="19",
"0", {\ar"4"},
"0", {\ar"6"},
"0", {\ar"9"},
"15", {\ar"0"},
"1", {\ar"3"},
"4", {\ar"1"},
"1", {\ar"7"},
"1", {\ar"10"},
"3", {\ar"2"},
"2", {\ar"5"},
"2", {\ar"8"},
"2", {\ar"11"},
"3", {\ar"4"},
"5", {\ar"3"},
"11", {\ar"3"},
"10", {\ar"4"},
"4", {\ar"15"},
"5", {\ar@{.>}"12"},
"13", {\ar@{.>}"5"},
"16", {\ar@{.>}"5"},
"9", {\ar"15"},
"12", {\ar"13"},
"12", {\ar"16"},
"12", {\ar"17"},
"13", {\ar"14"},
"15", {\ar"13"},
"19", {\ar"13"},
"14", {\ar"15"},
"14", {\ar"18"},
"14", {\ar"19"},
\end{xy}\]

This concludes what we need to consider from the initial part of the sequence. Now we must look at the additional pieces which are attached to the bottom of the quiver. We will call these subquivers $H_i$, and define it as $(H_i)_0=\{a_i,b_i,c_i,d_i,e_i,a_i',b_i',c_i',d_i',e_i'\}$ and $(H_i)_1=\{\text{ all arrows between elements of }(H_i)_0\}$. Below is a diagram of $H_i$ after performing the mutation sequence above. We have included in the diagram the vertices which $H_i$ is adjacent to as well, though they are not part of $H_i$.

\[\begin{xy} 0;<.6pt,0pt>:<0pt,-.6pt>:: 
(161,178) *+{\color{darkspringgreen}{e_i}} ="0",
(161,286) *+{\color{darkspringgreen}{c_i}} ="1",
(233,357) *+{\color{darkspringgreen}{d_i}} ="2",
(90,357) *+{\color{darkspringgreen}{a_i}} ="3",
(161,429) *+{\color{darkspringgreen}{b_i}} ="4",
(0,89) *+{\color{red}{f_{i-1}}} ="5",
(161,89) *+{\color{red}{f_i} }="6",
(215,178) *+{\color{blue}{e_i'} }="7",
(141,250) *+{\color{blue}{c_i'} }="8",
(287,357) *+{\color{blue}{d_i'} }="9",
(36,357) *+{\color{blue}{a_i'} }="10",
(159,464) *+{\color{blue}{b_i'}} ="11",
(233,0) *+{\color{blue}{f_i'}} ="12",
"2", {\ar"0"},
"0", {\ar"3"},
"5", {\ar"0"},
"0", {\ar"6"},
"0", {\ar"7"},
"0", {\ar"12"},
"2", {\ar"1"},
"3", {\ar"1"},
"1", {\ar|*+{\scriptstyle 2}"4"},
"1", {\ar"8"},
"3", {\ar"2"},
"4", {\ar"2"},
"2", {\ar"9"},
"4", {\ar"3"},
"3", {\ar"10"},
"4", {\ar"11"},
"6", {\ar"5"},
"12", {\ar"5"},
\end{xy}\]

The next part of our maximal green sequence will be mutation sequences that occur only on the vertices of the $H_i$, specifically we will consider what happens when we apply the $\sigma_i$ for each $i$.

We look at what occurs when we perform the mutation sequence $\sigma_i$. Since mutation is a local condition it will only effect the vertices shown in the above diagram, $(H_i)_0 \cup  \{f_i,f_{i-1},f_i'\}$. In addition it is important to note that only arrows between these vertices can be affected by the mutation sequence. Therefore the mutation sequences $\sigma_i$ and $\sigma_j$ will not interact with each other.

By computation we can check the result of performing the sequence of mutations  $\sigma_i$ on the subquiver $H_i$ since it is a finite number of steps. These computations were checked using the java applet developed by Keller. 

\[\begin{xy} 0;<.6pt,0pt>:<0pt,-.6pt>:: 
(161,178) *+{\color{red}{e_i}} ="0",
(161,286) *+{\color{red}{c_i}} ="1",
(233,357) *+{\color{red}{d_i}} ="2",
(90,357) *+{\color{red}{a_i}} ="3",
(161,429) *+{\color{red}{b_i}} ="4",
(0,89) *+{\color{darkspringgreen}{f_{i-1}}} ="5",
(161,89) *+{\color{red}{f_i} }="6",
(215,178) *+{\color{blue}{e_i'} }="7",
(141,250) *+{\color{blue}{c_i'} }="8",
(287,357) *+{\color{blue}{d_i'} }="9",
(36,357) *+{\color{blue}{a_i'} }="10",
(159,464) *+{\color{blue}{b_i'}} ="11",
(233,0) *+{\color{blue}{f_i'}} ="12",
"0", {\ar"2"},
"4", {\ar"0"},
"0", {\ar"5"},
"6", {\ar"0"},
"7", {\ar"0"},
"12", {\ar"0"},
"2", {\ar"1"},
"1", {\ar|*+{\scriptstyle 2}"3"},
"4", {\ar"1"},
"8", {\ar"1"},
"3", {\ar"2"},
"2", {\ar"4"},
"10", {\ar"2"},
"3", {\ar"4"},
"11", {\ar"3"},
"9", {\ar"4"},
"5", {\ar"6"},
"5", {\ar|*+{\scriptstyle 2}"7"},
"5", {\ar|*+{\scriptstyle 2}"8"},
"5", {\ar|*+{\scriptstyle 2}"9"},
"5", {\ar|*+{\scriptstyle 2}"10"},
"5", {\ar|*+{\scriptstyle 2}"11"},
"5", {\ar"12"},
\end{xy}\]
Notice that each sequence $\sigma_i$ results in the vertex $f_{i-1}$ becoming a green vertex. Therefore the only green vertices in the quiver after performing all of the sequences, $\sigma_i$, are the vertices $\{f_1,f_2,f_3, \dots f_n\}$.

 Another important aspect of the current state of the quiver is that there are no arrows $f_{i-1} \rightarrow a_i$, $f_{i-1} \rightarrow b_i$, $f_{i-1} \rightarrow c_i$, $f_{i-1} \rightarrow d_i$, or arrows in the opposite directions. Therefore when we perform the next portion of the mutation sequence, $(f_n,f_{n-1},\dots f_3,f_1,f_2,\dots f_n)$,  since all of the mutations occur at the vertices $\{f_i\}$ we will not introduce new arrows involving the vertices $\{a_i,b_i,c_i,d_i\}$. 

 Below is a diagram of the current state of the quiver, $\mu_{\sigma_1} \circ \mu_{\sigma_2} \cdots \mu_{\sigma_n} \circ \mu_{f_{n}} \circ \mu_{f_{n-1}} \circ \cdots \circ \mu_{f_3} \circ \mu_{f_1} \circ \cdots \circ \mu_{f_n}(Q_{T_n})$, in which we have omitted all the vertices except for the $\{f_i\}$.
\[\begin{xy} 0;<.5pt,0pt>:<0pt,-.5pt>:: 
(121,203) *+{\color{darkspringgreen}f_2} ="3",
(300,80) *+{\color{darkspringgreen}f_n} ="4",
(163,400) *+{\color{darkspringgreen}f_1} ="5",
(519,82) *+{\color{blue}e_n'} ="6",
(162,0) *+{\color{blue}e_1'} ="7",
(43,204) *+{\color{blue}e_2'} ="8",
(240,2) *+{\color{blue}d_1'} ="9",
(398,41) *+{\color{blue}a_1'} ="10",
(362,0) *+{\color{blue}b_1'} ="11",
(299,0) *+{\color{blue}c_1'} ="12",
(44,119) *+{\color{blue}c_2'} ="13",
(120,119) *+{\color{blue}a_2'} ="14",
(80,117) *+{\color{blue}b_2'} ="15",
(39,159) *+{\color{blue}d_2'} ="16",
(81,401) *+{\color{blue}b_3'} ="17",
(83,442) *+{\color{blue}c_3'} ="18",
(81,362) *+{\color{blue}a_3'} ="19",
(119,481) *+{\color{blue}c_3'} ="20",
(571,200) *+{\color{blue}c_n'} ="21",
(573,84) *+{\color{blue}a_n'} ="22",
(575,159) *+{\color{blue}b_n'} ="23",
(572,243) *+{\color{blue}d_n'} ="24",
(480,3) *+{\color{blue}f_n'} ="25",
(85,4) *+{\color{blue}f_1'} ="26",
(0,319) *+{\color{blue}f_2'} ="27",
(398,400) *+{\color{darkspringgreen}f_{3}} ="29",
(485,199) *+{\color{darkspringgreen}f_{n-1}} ="31",
(283,483) *+{\color{blue}f_{3}'} ="32",
(162,485) *+{\color{blue}e_{3}'} ="33",
(364,481) *+{\color{blue}a_{4}'} ="34",
(442,480) *+{\color{blue}c_{4}'} ="35",
(403,478) *+{\color{blue}b_{4}'} ="36",
(483,444) *+{\color{blue}d_{4}'} ="37",
(483,402) *+{\color{blue}e_{4}'} ="38",
(522,359) *+{\color{blue}f_{4}'} ="39",
"4", {\ar"3"},
"3", {\ar"5"},
"3", {\ar|*+{\scriptstyle 2}"8"},
"3", {\ar|*+{\scriptstyle 2}"13"},
"3", {\ar|*+{\scriptstyle 2}"14"},
"3", {\ar|*+{\scriptstyle 2}"15"},
"3", {\ar|*+{\scriptstyle 2}"16"},
"3", {\ar"27"},
"4", {\ar|*+{\scriptstyle 2}"7"},
"4", {\ar|*+{\scriptstyle 2}"9"},
"4", {\ar|*+{\scriptstyle 2}"10"},
"4", {\ar|*+{\scriptstyle 2}"11"},
"4", {\ar|*+{\scriptstyle 2}"12"},
"4", {\ar"26"},
"31", {\ar"4"},
"5", {\ar|*+{\scriptstyle 2}"17"},
"5", {\ar|*+{\scriptstyle 2}"18"},
"5", {\ar|*+{\scriptstyle 2}"19"},
"5", {\ar|*+{\scriptstyle 2}"20"},
"5", {\ar"29"},
"5", {\ar"32"},
"5", {\ar|*+{\scriptstyle 2}"33"},
"31", {\ar|*+{\scriptstyle 2}"6"},
"31", {\ar|*+{\scriptstyle 2}"21"},
"31", {\ar|*+{\scriptstyle 2}"22"},
"31", {\ar|*+{\scriptstyle 2}"23"},
"31", {\ar|*+{\scriptstyle 2}"24"},
"31", {\ar"25"},
"29", {\ar@{.>}"31"},
"29", {\ar|*+{\scriptstyle 2}"34"},
"29", {\ar|*+{\scriptstyle 2}"35"},
"29", {\ar|*+{\scriptstyle 2}"36"},
"29", {\ar|*+{\scriptstyle 2}"37"},
"29", {\ar|*+{\scriptstyle 2}"38"},
"29", {\ar"39"},
\end{xy}\]

We notice that this diagram is the same as the diagram from Lemma \ref{cycle}, with some minor alterations. First, the cycle has a reversed orientation. We now have attached mulitple frozen vertices to each $f_i$ and we have permuted the vertices $f_1$ and $f_2$. Additionally the indices of the frozen vertices do not match the indices of the mutable vertex they are adjacent to. The important aspect is that we can utilize the same sequence of mutations that we used before to turn all of these vertices red, by adjusting for the new ordering of the vertices $\{f_i\}$. 

We choose a starting point and then mutate in the direction of the cycle, until we reach the end of the cycle, in which case we turn around and run the sequence in reverse, but skipping the first two steps of the sequence. The sequence of mutations we use is the following $(f_3, f_4, \dots f_n,f_2,f_1,f_n,f_{n-1},\dots f_3)$. This sequence is chosen because the result will permute the vertices $f_1$ and $f_2$, undoing the permutation from performing the sequence in Lemm \ref{cycle}. The resulting quiver after performing the sequence of mutations is shown below.  

\[\begin{xy} 0;<.6pt,0pt>:<0pt,-.6pt>:: 
(103,253) *+{\color{red}f_2} ="0",
(342,95) *+{\color{red}f_n} ="1",
(170,93) *+{\color{red}f_1} ="2",
(373,19) *+{\color{blue}e_n'} ="3",
(95,27) *+{\color{blue}e_1'} ="4",
(28,290) *+{\color{blue}e_2'} ="5",
(139,1) *+{\color{blue}d_1'} ="6",
(251,50) *+{\color{blue}a_1'} ="7",
(241,6) *+{\color{blue}b_1'} ="8",
(188,0) *+{\color{blue}c_1'} ="9",
(10,194) *+{\color{blue}c_2'} ="10",
(97,171) *+{\color{blue}a_2'} ="11",
(49,170) *+{\color{blue}b_2'} ="12",
(0,241) *+{\color{blue}d_2'} ="13",
(188,463) *+{\color{blue}b_3'} ="14",
(221,495) *+{\color{blue}c_3'} ="15",
(166,422) *+{\color{blue}a_3'} ="16",
(290,493) *+{\color{blue}d_3'} ="17",
(433,139) *+{\color{blue}b_n'} ="18",
(424,52) *+{\color{blue}d_n'} ="19",
(432,99) *+{\color{blue}c_n'} ="20",
(402,162) *+{\color{blue}a_n'} ="21",
(310,15) *+{\color{blue}f_n'} ="22",
(82,87) *+{\color{blue}f_1'} ="23",
(73,329) *+{\color{blue}f_2'} ="24",
(254,384) *+{\color{red}f_3} ="25",
(410,254) *+{\color{red}f_{n-1}} ="26",
(337,426) *+{\color{blue}f_3'} ="27",
(320,464) *+{\color{blue}e_3'} ="28",
(371,365) *+{\color{blue}a_{n-1}'} ="29",
(444,363) *+{\color{blue}c_{n-1}'} ="30",
(405,368) *+{\color{blue}b_{n-1}'} ="31",
(473,339) *+{\color{blue}d_{n-1}'} ="32",
(495,300) *+{\color{blue}e_{n-1}'} ="33",
(505,254) *+{\color{blue}f_{n-1}'} ="34",
"2", {\ar"0"},
"5", {\ar|*+{\scriptstyle 2}"0"},
"10", {\ar|*+{\scriptstyle 2}"0"},
"11", {\ar|*+{\scriptstyle 2}"0"},
"12", {\ar|*+{\scriptstyle 2}"0"},
"13", {\ar|*+{\scriptstyle 2}"0"},
"24", {\ar"0"},
"0", {\ar"25"},
"1", {\ar"2"},
"3", {\ar|*+{\scriptstyle 2}"1"},
"18", {\ar|*+{\scriptstyle 2}"1"},
"19", {\ar|*+{\scriptstyle 2}"1"},
"20", {\ar|*+{\scriptstyle 2}"1"},
"21", {\ar|*+{\scriptstyle 2}"1"},
"22", {\ar"1"},
"26", {\ar"1"},
"4", {\ar|*+{\scriptstyle 2}"2"},
"6", {\ar|*+{\scriptstyle 2}"2"},
"7", {\ar|*+{\scriptstyle 2}"2"},
"8", {\ar|*+{\scriptstyle 2}"2"},
"9", {\ar|*+{\scriptstyle 2}"2"},
"23", {\ar"2"},
"14", {\ar|*+{\scriptstyle 2}"25"},
"15", {\ar|*+{\scriptstyle 2}"25"},
"16", {\ar|*+{\scriptstyle 2}"25"},
"17", {\ar|*+{\scriptstyle 2}"25"},
"25", {\ar@{.>}"26"},
"27", {\ar"25"},
"28", {\ar|*+{\scriptstyle 2}"25"},
"29", {\ar|*+{\scriptstyle 2}"26"},
"30", {\ar|*+{\scriptstyle 2}"26"},
"31", {\ar|*+{\scriptstyle 2}"26"},
"32", {\ar|*+{\scriptstyle 2}"26"},
"33", {\ar|*+{\scriptstyle 2}"26"},
"34", {\ar"26"},
\end{xy}\]

To understand how this sequence will impact the other vertices of the quiver it is important to note that the only vertices which connect to the $\{f_i\}$ at this stage of the quiver are the vertices $\{e_i\}$ and the frozen vertices. Below is a diagram depicting the quiver and the vertices which are adjacent to the $\{f_i\}$.

\[\begin{xy} 0;<.5pt,0pt>:<0pt,-.5pt>:: 
(440,82) *+{\color{red}e_n} ="0",
(168,80) *+{\color{red}e_1} ="1",
(82,318) *+{\color{red}e_2} ="2",
(121,203) *+{\color{darkspringgreen}f_2} ="3",
(300,80) *+{\color{darkspringgreen}f_n} ="4",
(163,400) *+{\color{darkspringgreen}f_1} ="5",
(519,82) *+{\color{blue}e_n'} ="6",
(162,0) *+{\color{blue}e_1'} ="7",
(43,204) *+{\color{blue}e_2'} ="8",
(240,2) *+{\color{blue}d_1'} ="9",
(398,41) *+{\color{blue}a_1'} ="10",
(362,0) *+{\color{blue}b_1'} ="11",
(299,0) *+{\color{blue}c_1'} ="12",
(44,119) *+{\color{blue}c_2'} ="13",
(120,119) *+{\color{blue}a_2'} ="14",
(80,117) *+{\color{blue}b_2'} ="15",
(39,159) *+{\color{blue}d_2'} ="16",
(81,401) *+{\color{blue}b_3'} ="17",
(83,442) *+{\color{blue}c_3'} ="18",
(81,362) *+{\color{blue}a_3'} ="19",
(119,481) *+{\color{blue}c_3'} ="20",
(571,200) *+{\color{blue}c_n'} ="21",
(573,84) *+{\color{blue}a_n'} ="22",
(575,159) *+{\color{blue}b_n'} ="23",
(572,243) *+{\color{blue}d_n'} ="24",
(480,3) *+{\color{blue}f_n'} ="25",
(85,4) *+{\color{blue}f_1'} ="26",
(0,319) *+{\color{blue}f_2'} ="27",
(282,440) *+{\color{red}e_{3}} ="28",
(398,400) *+{\color{darkspringgreen}f_{3}} ="29",
(481,322) *+{\color{red}e_4} ="30",
(485,199) *+{\color{darkspringgreen}f_{n-1}} ="31",
(283,483) *+{\color{blue}f_{3}'} ="32",
(162,485) *+{\color{blue}e_{3}'} ="33",
(364,481) *+{\color{blue}a_{4}'} ="34",
(442,480) *+{\color{blue}c_{4}'} ="35",
(403,478) *+{\color{blue}b_{4}'} ="36",
(483,444) *+{\color{blue}d_{4}'} ="37",
(483,402) *+{\color{blue}e_{4}'} ="38",
(522,359) *+{\color{blue}f_{4}'} ="39",
"4", {\ar"0"},
"6", {\ar"0"},
"25", {\ar"0"},
"0", {\ar"31"},
"3", {\ar"1"},
"1", {\ar"4"},
"7", {\ar"1"},
"26", {\ar"1"},
"2", {\ar"3"},
"5", {\ar"2"},
"8", {\ar"2"},
"27", {\ar"2"},
"4", {\ar"3"},
"3", {\ar"5"},
"3", {\ar|*+{\scriptstyle 2}"8"},
"3", {\ar|*+{\scriptstyle 2}"13"},
"3", {\ar|*+{\scriptstyle 2}"14"},
"3", {\ar|*+{\scriptstyle 2}"15"},
"3", {\ar|*+{\scriptstyle 2}"16"},
"3", {\ar"27"},
"4", {\ar|*+{\scriptstyle 2}"7"},
"4", {\ar|*+{\scriptstyle 2}"9"},
"4", {\ar|*+{\scriptstyle 2}"10"},
"4", {\ar|*+{\scriptstyle 2}"11"},
"4", {\ar|*+{\scriptstyle 2}"12"},
"4", {\ar"26"},
"31", {\ar"4"},
"5", {\ar|*+{\scriptstyle 2}"17"},
"5", {\ar|*+{\scriptstyle 2}"18"},
"5", {\ar|*+{\scriptstyle 2}"19"},
"5", {\ar|*+{\scriptstyle 2}"20"},
"28", {\ar"5"},
"5", {\ar"29"},
"5", {\ar"32"},
"5", {\ar|*+{\scriptstyle 2}"33"},
"31", {\ar|*+{\scriptstyle 2}"6"},
"31", {\ar|*+{\scriptstyle 2}"21"},
"31", {\ar|*+{\scriptstyle 2}"22"},
"31", {\ar|*+{\scriptstyle 2}"23"},
"31", {\ar|*+{\scriptstyle 2}"24"},
"31", {\ar"25"},
"29", {\ar"28"},
"32", {\ar"28"},
"33", {\ar"28"},
"30", {\ar"29"},
"29", {\ar@{.>}"31"},
"29", {\ar|*+{\scriptstyle 2}"34"},
"29", {\ar|*+{\scriptstyle 2}"35"},
"29", {\ar|*+{\scriptstyle 2}"36"},
"29", {\ar|*+{\scriptstyle 2}"37"},
"29", {\ar|*+{\scriptstyle 2}"38"},
"29", {\ar"39"},
"31", {\ar@{.>}"30"},
"38", {\ar"30"},
"39", {\ar"30"},
\end{xy}\]

First, we notice that the subquiver including only the vertices $\{f_i\} \cup \{e_i\}$ is exactly the same quiver as the quiver we started with before we did any mutations (with a change of orientation). Therefore since this sequence of mutations is the same as before with an adjustment for this change of orientation we can see that it will have the same effect on the vertices $\{e_i\}$, in terms of creating arrows between the vertices $\{f_i\}$ and $\{e_i\}$. Therefore like before it will not effect the arrows $e_i\rightarrow f_{i-1}$ and $f_n\rightarrow e_n$ except for the fact that the vertices $f_1$ and $f_2$ are permuted by this sequence of muations. Below is a diagram of the end result, with the frozen variables removed to make it easier to see the end result.

\[\begin{xy} 0;<.5pt,0pt>:<0pt,-.5pt>:: 
(458,3) *+{\color{darkspringgreen}e_n} ="0",
(110,0) *+{\color{darkspringgreen}e_1} ="1",
(0,305) *+{\color{darkspringgreen}e_2} ="2",
(121,406) *+{\color{red}f_2} ="3",
(278,8) *+{\color{red}f_n} ="4",
(70,161) *+{\color{red}f_1} ="5",
(256,461) *+{\color{darkspringgreen}e_3} ="6",
(407,410) *+{\color{red}f_3} ="7",
(511,310) *+{\color{darkspringgreen}e_{n-1}} ="8",
(513,154) *+{\color{red}f_{n-1}} ="9",
"4", {\ar"0"},
"0", {\ar"9"},
"1", {\ar"4"},
"5", {\ar"1"},
"3", {\ar"2"},
"2", {\ar"5"},
"5", {\ar"3"},
"6", {\ar"3"},
"3", {\ar"7"},
"4", {\ar"5"},
"9", {\ar"4"},
"7", {\ar"6"},
"8", {\ar@{.>}"7"},
"7", {\ar@{.>}"9"},
"9", {\ar"8"},
\end{xy}\]

Now the only thing left to do is keep track of the arrows created between the frozen vertices and the vertices $\{e_i\}$ as this sequence of muations is performed. At each initial step of the mutation, $\mu_{f_i}$, the vertex $e_{i+1}$ will gain arrows to each frozen vertex incident to $f_i$. Also important, is that the arrow $e_{i+1}\rightarrow f_{i+1}$ is deleted from this mutation. This means that no additional arrows between $e_{i+1}$ and the frozen vertices will be created during this mutation sequence. Below is a diagram showing this interaction before and after the mutation $\mu_{f_i}$.

\[\begin{xy} 0;<.45pt,0pt>:<0pt,-.45pt>:: 
(50,250) *+{\color{darkspringgreen}f_i} ="0",
(200,150) *+{\color{red}e_{i+1}} ="1",
(175,0) *+{\color{darkspringgreen}f_{i+1}} ="2",
(0,375) *+{\color{blue}a_{i+1}'} ="3",
(150,375) *+{\color{blue}c_{i+1}'} ="4",
(75,375) *+{\color{blue}b_{i+1}'} ="5",
(200,350) *+{\color{blue}d_{i+1}'} ="6",
(225,300) *+{\color{blue}e_{i+1}'} ="7",
(250,225) *+{\color{blue}f_{i+1}'} ="8",
(650,250) *+{\color{red}f_i} ="9",
(800,150) *+{\color{darkspringgreen}e_{i+1}} ="10",
(775,0) *+{\color{darkspringgreen}f_{i+1}} ="11",
(600,375) *+{\color{blue}a_{i+1}'} ="12",
(750,375) *+{\color{blue}c_{i+1}'} ="13",
(675,375) *+{\color{blue}b_{i+1}'} ="14",
(800,350) *+{\color{blue}d_{i+1}'} ="15",
(825,300) *+{\color{blue}e_{i+1}'} ="16",
(850,225) *+{\color{blue}f_{i+1}'} ="17",
"1", {\ar"0"},
"0", {\ar"2"},
"0", {\ar|*+{\scriptstyle 2}"3"},
"0", {\ar|*+{\scriptstyle 2}"4"},
"0", {\ar|*+{\scriptstyle 2}"5"},
"0", {\ar|*+{\scriptstyle 2}"6"},
"0", {\ar|*+{\scriptstyle 2}"7"},
"0", {\ar"8"},
"2", {\ar"1"},
"7", {\ar"1"},
"8", {\ar"1"},
"9", {\ar"10"},
"11", {\ar"9"},
"12", {\ar|*+{\scriptstyle 2}"9"},
"13", {\ar|*+{\scriptstyle 2}"9"},
"14", {\ar|*+{\scriptstyle 2}"9"},
"15", {\ar|*+{\scriptstyle 2}"9"},
"16", {\ar|*+{\scriptstyle 2}"9"},
"17", {\ar"9"},
"10", {\ar|*+{\scriptstyle 2}"12"},
"10", {\ar|*+{\scriptstyle 2}"13"},
"10", {\ar|*+{\scriptstyle 2}"14"},
"10", {\ar|*+{\scriptstyle 2}"15"},
"10", {\ar"16"},
\end{xy}\]

At this point there is only one part of the sequence left consider: $(\tau_n,\tau_{n-1},\dots,\tau_1)$. Before we do this let us look at the state of the current quiver. We have the cycle below and attached to each vertex is the subquiver $\tilde{H_i}$.

\[\begin{xy} 0;<.5pt,0pt>:<0pt,-.5pt>:: 
(458,3) *+{\tilde{H_n}} ="0",
(110,0) *+{\tilde{H_1}} ="1",
(0,305) *+{\tilde{H_2}} ="2",
(121,406) *+{\color{red}f_2} ="3",
(278,8) *+{\color{red}f_n} ="4",
(70,161) *+{\color{red}f_1} ="5",
(256,461) *+{\tilde{H_3}} ="6",
(407,410) *+{\color{red}f_3} ="7",
(511,310) *+{{\tilde H_{n-1}}} ="8",
(513,154) *+{\color{red}f_{n-1}} ="9",
"4", {\ar"0"},
"0", {\ar"9"},
"1", {\ar"4"},
"5", {\ar"1"},
"3", {\ar"2"},
"2", {\ar"5"},
"5", {\ar"3"},
"6", {\ar"3"},
"3", {\ar"7"},
"4", {\ar"5"},
"9", {\ar"4"},
"7", {\ar"6"},
"8", {\ar@{.>}"7"},
"7", {\ar@{.>}"9"},
"9", {\ar"8"},
\end{xy}\]

Below is a diagram of each $\tilde{H_i}$ and how it connects to the large cycle:
\[\begin{xy} 0;<.5pt,0pt>:<0pt,-.5pt>:: 
(329,189) *+{\color{darkspringgreen}e_i} ="0",
(329,299) *+{\color{red}c_i} ="1",
(410,382) *+{\color{red}d_i} ="2",
(247,382) *+{\color{red}a_i} ="3",
(329,462) *+{\color{red}b_i} ="4",
(0,81) *+{\color{red}f_{i-1}} ="5",
(482,0) *+{\color{red}f_i} ="6",
(254,189) *+{\color{blue}e_i'} ="7",
(289,277) *+{\color{blue}c_i'} ="8",
(493,437) *+{\color{blue}d_i'} ="9",
(164,327) *+{\color{blue}a_i'} ="10",
(399,589) *+{\color{blue}b_i'} ="11",
"0", {\ar"2"},
"4", {\ar"0"},
"0", {\ar"5"},
"6", {\ar"0"},
"0", {\ar"7"},
"0", {\ar|*+{\scriptstyle 2}"8"},
"0", {\ar|*+{\scriptstyle 2}"9"},
"0", {\ar|*+{\scriptstyle 2}"10"},
"0", {\ar|*+{\scriptstyle 2}"11"},
"2", {\ar"1"},
"1", {\ar|*+{\scriptstyle 2}"3"},
"4", {\ar"1"},
"8", {\ar"1"},
"3", {\ar"2"},
"2", {\ar"4"},
"10", {\ar"2"},
"3", {\ar"4"},
"11", {\ar"3"},
"9", {\ar"4"},
"5", {\ar"6"},
"7", {\ar|*+{\scriptstyle 2}"6"},
"8", {\ar|*+{\scriptstyle 2}"6"},
"9", {\ar|*+{\scriptstyle 2}"6"},
"10", {\ar|*+{\scriptstyle 2}"6"},
"11", {\ar|*+{\scriptstyle 2}"6"},
\end{xy}\]

The mutation sequence $\tau_i$ is a sequence only on the of $\tilde{H_i}$ and hence will not effect the vertices of $\tilde{H_j}$ with $j\neq i$. By computation we can check to see that each mutation sequence $\tau_i$ will turn the vertex $e_i$ into a red vertex while leaving all of the other vertices red as well. The end result can be checked by using the Keller mutation applet and is shown below.

\[\begin{xy} 0;<.5pt,0pt>:<0pt,-.5pt>:: 
(329,189) *+{\color{red}e_i} ="0",
(329,299) *+{\color{red}c_i} ="1",
(410,382) *+{\color{red}d_i} ="2",
(247,382) *+{\color{red}a_i} ="3",
(329,462) *+{\color{red}b_i} ="4",
(0,81) *+{\color{red}f_{i-1}} ="5",
(482,0) *+{\color{red}f_i} ="6",
(274,189) *+{\color{blue}e_i'} ="7",
(309,257) *+{\color{blue}c_i'} ="8",
(493,437) *+{\color{blue}d_i'} ="9",
(164,327) *+{\color{blue}a_i'} ="10",
(399,589) *+{\color{blue}b_i'} ="11",
"1", {\ar"0"},
"0", {\ar"3"},
"5", {\ar"0"},
"0", {\ar"6"},
"7", {\ar"0"},
"2", {\ar"1"},
"3", {\ar"1"},
"1", {\ar"4"},
"9", {\ar"1"},
"2", {\ar"3"},
"4", {\ar|*+{\scriptstyle 2}"2"},
"11", {\ar"2"},
"3", {\ar"4"},
"10", {\ar"3"},
"8", {\ar"4"},
"6", {\ar"5"},
\end{xy}\]

Then the performance of each sequence $\tau_i$ does not create any green vertices. It only turns the vertex $e_i$ from a green vertex to a red vertex. Therefore after completing each mutation sequence $\tau_i$, every vertex in the quiver will be red. This means that the sequence of mutation which we performed was a maximal green sequence. Or in other words that, $$(f_n,f_{n-1},\dots,f_1,f_3,f_4,\dots f_n,\sigma_n,\sigma_{n-1}, \dots \sigma_{1},f_3, f_4, \dots f_n,f_2,f_1,f_n,f_{n-1},\dots f_3, \tau_n,\tau_{n-1},\dots,\tau_1)$$ is a maximal green sequence for the quiver $Q_{T_n}$.

\section*{Acknowledgements}
This paper would not be possible without the helpful insight from M. Yakimov, and the LSU cluster algebra group.

\end{document}